\newtheorem{thm}{Theorem}[section]
\newtheorem{prop}[thm]{Proposition}
\newtheorem{cor}[thm]{Corollary}
\newtheorem{rem}[thm]{Remark}
\newtheorem{defn}[thm]{Definition}
\newtheorem{exmp}[thm]{Example}
\makeatletter\label{e:dispaa}
\def\theequation{\thesection.\@arabic\c@equation}
\def\cal#1{\mathcal{#1}}
\newcommand{\ca}{\cal{A}}
\newcommand{\cb}{\cal{B}}
\newcommand{\ci}{\cal{I}}
\newcommand{\cm}{\cal{M}}
\newcommand{\mcst}{(\mC_0 \times \mC_0)^*}
\newcommand{\benu}{\begin{enumerate}}
\newcommand{\enu}{\end{enumerate}}
\newcommand{\beqna}{\begin{eqnarray}}
\newcommand{\eqna}{\end{eqnarray}}
\newcommand{\beqnast}{\begin{eqnarray*}}
\newcommand{\eqnast}{\end{eqnarray*}}
\newcommand{\beqn}{\begin{equation}}
\newcommand{\eqn}{\end{equation}}
\newcommand{\beqnst}{\begin{equation*}}
\newcommand{\eqnst}{\end{equation*}}
\newcommand{\mA}{{\mathcal A}}
\newcommand{\mB}{{\mathcal B}}
\newcommand{\mC}{{\mathcal C}}
\newcommand{\mD}{{\mathcal D}}
\newcommand{\bema}{\left ( \begin{array}}
\newcommand{\ema}{\end{array} \right )}
\newcommand{\Hom}{\operatorname{Hom}}
\newcommand{\End}{\operatorname{End}}
\newcommand{\cat}{\operatorname{cat}}
\newcommand{\xAx}{\ _{x}{\cal A}_{x}}
\newcommand{\yAy}{\ _{y}{\cal A}_{y}}
\newcommand{\yAx}{\ _{y}{\cal A}_{x}}
\newcommand{\zAy}{\ _{z}{\cal A}_{y}}
\newcommand{\yBx}{\ _{y}{\cal B}_{x}}
\newcommand{\zBy}{\ _{z}{\cal B}_{y}}
\newcommand{\zBx}{\ _{z}{\cal B}_{x}}
\newcommand{\xCx}{\ _{x}{\cal C}_{x}}
\newcommand{\yCy}{\ _{y}{\cal C}_{y}}
\newcommand{\yCx}{\ _{y}{\cal C}_{x}}
\newcommand{\zCy}{\ _{z}{\cal C}_{y}}
\newcommand{\xDx}{\ _{x}{\cal D}_{x}}
\newcommand{\yDx}{\ _{y}{\cal D}_{x}}
\newcommand{\morita}{\cal{D}(P,{\cal C},Q)}
\newcommand{\yIx}{\ _{y}{\cal I}_{x}}
\newcommand{\zIy}{\ _{z}{\cal I}_{y}}
\newcommand{\xLx}{\ _{x}\Lambda_{x}}
\newcommand{\yfx}{\ _{y}f_{x}}
\newcommand{\zfy}{\ _{z}f_{y}}
\newcommand{\ygx}{\ _{y}g_{x}}
\newcommand{\zgy}{\ _{z}g_{y}}
\newcommand{\xex}{\ _{x}e_{x}}
\newcommand{\yey}{\ _{y}e_{y}}
\newcommand{\zez}{\ _{z}e_{z}}
\newcommand{\xegx}{\ _x^{}e_x^g}
\newcommand{\yegy}{\ _y^{}e_y^g}
\newcommand{\ug}{g}
\newcommand{\xidx}{\ _{x}1_{x}}
\newcommand{\yidy}{\ _{y}1_{y}}
\newcommand{\zidz}{\ _{z}1_{z}}
\newcommand{\back}{\hspace{-3pt}}
\newcommand{\otu}{\underline{\otimes}}
\newcommand{\ot}{\otimes}
\newcommand{\ASH}{\underline{{\cal A} \# H}}
\newcommand{\xASHx}{\ _x(\ASH) _x}
\newcommand{\yASHx}{\ _y(\ASH) _x}
\newcommand{\vect}{k\rm{-Vect}}
\newcommand{\alg}{\operatorname{Alg}}
\newcommand{\btil}{g}
\newcommand{\atil}{f}
\begin{document}

\title{Partial Hopf module categories}

\author{Edson R.~Alvares, Marcelo M.S.~Alves, Eliezer Batista 
\thanks{\footnotesize This work has been supported by the projects Projeto Mobilidade AUGM (Grupo Montevideo), Prosul CNPq no. 490065/2010-4, Funda\c cao Araucaria 490/16032, CNPq 304705/2010-1.}}

\maketitle
\date{}

\begin{abstract} The effectiveness of the aplication of constructions in 
$G$-graded $k$-categories to the computation of the fundamental group of a finite dimensional $k$-algebra, 
alongside with open problems still left untouched by those methods 
and new problems arisen from the introduction of the concept of fundamental group of a $k$-linear category,  
motivated the investigation of 
 $H$-module categories, i.e.,  actions of a Hopf algebra $H$ on a $k$-linear category. The $G$-graded case corresponds then to actions of the Hopf algebra $k^G$
on a $k$-linear category.  
In this work we take a step further and introduce partial $H$-module categories. We extend several results of partial $H$-module algebras to this context, 
such as the globalization theorem, the construction of the partial smash product and 
the Morita equivalence of this category with the smash product over a globalization. 
We also present a detailed description of partial actions of $k^G$.

 \end{abstract}
\small \noindent 2010 Mathematics Subject Classification : 16T05, 16S35, 16S40, 18E05, 16D90.

\noindent Keywords: partial action, grading, Hopf algebra.

\maketitle
\date{}

\section{Introduction}

Group graded rings and algebras became a very important topic of research and several fundamental results were proved in this subject, for example, the Cohen Montgomery duality theorem \cite{CoM}. In fact, the concept of graded ring can be extended to a categorical level, giving rise to group graded $k$ linear categories and  there are several  problems leading to this concept, for example the problem of finding Galois coverings of a category defined by a quiver. Let $A$ be a basic, connected, finite dimensional algebra over an algebraically closed field $k$. By a result of Gabriel\cite{G}, there exists a unique finite connected quiver
$Q$ and a two-sided ideal $I$ of the path algebra $kQ$, such that $A \cong kQ/I$. Such a pair $(Q,I)$ is called a bound quiver. 
 The morphism $v:kQ\rightarrow A\cong kQ/I$  as well as  $(Q, I)$ are both called presentations of $A$. Following \cite{MVP}, one can define the
fundamental group $\pi_{1} (Q, I)$. This quiver can also be considered as a $k$-category, denoted by $\mC$ then, the fundamental group $\pi_{1} (Q, I)$  can be viewed as the fundamental group of $\mC$.  In this situation we may construct a covering $\mC\#G$ as it is done in \cite{CM}, that we shall call a {\em smash covering}. Given $\mC$, the full subcategory of the category of Galois coverings with a fixed point whose objects are the smash coverings is equivalent to the whole category, so the computation of the fundamental group of $\mC$ may be restricted to this subcategory \cite{CRS}. This fundamental group coincides with the one previously defined by Bongartz and Gabriel \cite{BG} in certain situations, for example when a universal covering exists.
A universal covering corresponds to a universal grading, i. e. a grading  such that any other one is a quotient of this grading (there are several algebras, such as the matrix algebras, for which such a universal grading does not exist  - see \cite{CRS} for details).

The most apropriated language to treat $G$ graded rings and algebras is of Hopf algebras. A $G$ graded algebra $A$ can be viewed as a $k^G$ module algebra, then it is natural to consider $G$ graded categories as appropriated $k^G$ module categories, more generally, we can define, for any Hopf algebra $H$ the concept of $H$ module category,  where by category we mean $k$-linear category, this is done in \cite{CS}.

Another rich source of interesting problems is to consider partial actions. Partial actions of groups were first proposed  by R.~Exel and other authors in the context of $C^*$-algebras, see for example \cite{E}, and afterwards they appeared in a purely algebraic setting (\cite{DE, E2, FL, Lo}). Partial actions of groups became important as a tool to characterize algebras as partial crossed products. Soon after, the theory of partial actions was extended to the Hopf algebraic setting \cite{CJ}. The most basic example of partial Hopf action is given by a partial action of a group $G$ on a unital algebra $A$  in which the partial domains are unital ideals in $A$, this gives a partial action of the group algebra $kG$ on $A$.  One of the main concerns when working with partial Hopf actions is to exhibit nontrivial examples
of them which are neither usual actions nor coming from partial group actions. For the case of $H=U(\mathfrak g)$, the enveloping algebra of a Lie algebra, it is known \cite{CJ} that every partial action is in fact a global action. The second and the third authors gave in \cite{AB1} an example of a partial action of the dual group algebra $k^G$ on an ideal of $kG$, this can be considered as a partial $G$ grading.

The idea underlying this article is to consider a more general setting, that is: partial $G$ gradings of $k$-categories as a special case of partial Hopf-module categories, in particular, when the category has only a single object, this characterizes partial $G$ gradings on algebras. Given a Hopf algebra $H$, the concept of a partial $H$ module category unifies both  Hopf module categories, when the partial action is global, and  partial $H$ module algebras, when the category has a single object. We develop in this article a procedure that may be used to obtain nontrivial examples of partial $H$ module categories in a systematic way. We apply it to different situations, for instance, the partial actions of $k^G$ over the base field $k$ are fully classified and then lifted to partial $k^G$ actions on an arbitrary $k$ category, making them $k^G$ module categories. We show also that the examples of partial action with $H=H_4$, the Sweedler $4$-dimensional Hopf algebra, cover all possible ones.

The contents of the article are as follows.

In Section \ref{toolkit}
we deal with some prerequisites.

In Section \ref{lambdas y ejemplos}
the definition of partial $H$-module category is given, and afterwards we reformulate it so as to handle the conditions in an easier way. This reformulation leads to a detailed description of partial actions for $H=k^G$  (see Theorem \ref{graduacoesparciais}). With the same techniques,  it is also possible to classify all partial actions of the Sweedler Hopf algebra on the base field $k$. In the subsequent subsections we discuss different kinds of examples, including the partial group actions on $k$ categories \cite{FL} and  induced partial Hopf actions.

In Section \ref{globalization} we prove (see Theorem \ref{minimal globalization}) that every partial $H$-module category has a minimal 
globalization, unique up to isomorphism.
It is natural, given a partial $H$-module category $\mC$ to define a partial smash product
$\underline{\mC\#H}$ and this is what we do in Section \ref{partial smash}, where we also study its relation with the smash product $\mD\#H$, where $\mD$ is a globalization of $\mC$, proving in Section \ref{Morita} that there is a Morita equivalence. The proof of Morita equivalence between the partial smash product and the globalized one, unlike in the algebraic case, done in \cite{AB1}, required categorical tools, more specifically a result for characterizing Morita equivalence between $k$-categories proved in \cite{CS}.

\section{A categorical toolkit}\label{toolkit}
 
This section, as its title suggests, contains definitions and some basic properties that will be used throughout the article, as well as some examples.
We refer the reader to \cite{BG} for the definition of $k$-categories. In what follows, if $\mA$ is a category then $\mA_0$ and $\mA_1$ denote  respectively the objects and morphisms of $\mA$. 

\begin{defn} Let $X$ be a non-empty set. A $k$-category over $X$, or, shortly, an $X$-category is a $k$-category $\mA$ with $\mA_0=X$. Given $X$-categories $\mA$ and $\mB$ and a $k$-functor $F: \mA \rightarrow \mB$, 
we will say that $F$ is an $X$-functor if $F(x) = x$ for each $x \in X = \mA_0 = \mB_0$. 
\end{defn}

Note that $X$-categories with $X$-functors as morphisms form a category, let us denote it as $X-\cat$. Most of the constructions in this article involve this category.

\begin{exmp}\label{example_X_finite} If $X$ is a finite set of $n$ elements, say $X = \{1, \ldots, n\}$,  and $\mA$ is an $X$-category, then we may construct as in \cite{CS} the matrix algebra 
\[
a(\mA )=\left\{ \left. ({}_yf_x )_{y,x} \, \right| \, {}_yf_x \in {}_y\mA_x \right\} 
\]
where the product is given by  $({}_yf_x) ({}_yg_x) = (\sum_z {}_yf_z \circ {}_zg_x) $. Note also that 
if $\xidx^\mA$ is the identity morphism of $x \in X$ then the matrix  $e_x = \xidx^\mA \ E_{x,x}$ is an idempotent 
of $a(\mA)$, and the collection $\{e_x\}_{x \in X}$ is a complete set of (nontrivial) orthonormal idempotents  of $a(\mA)$. Moreover, if $\mB$ is another $X$-category, every functor  $F: \mA \rightarrow \mB$ induces canonically the algebra morphism $(\yfx) \mapsto (F(\yfx))$, which takes the idempotent $e_x = \xidx^\mA \ E_{x,x} \in a(\mA)$ to $f_x = \xidx^\mB \ E_{x,x} \in a(\mB)$.  

This construction suggests the following $k$-category: let $X-\alg$ denote the category whose objects are pairs $(A,\{e_x\}_{x \in X})$, where $A$ is a $k$-algebra and  
$\{e_x\}_{x \in X}$ is a  complete system of $n$ orthogonal idempotents of $A$, and a morphism $\varphi : (A,\{e_x\}_{x \in X}) \rightarrow (B,\{f_x\}_{x \in X})$ in
$X-\alg$  is an algebra morphism $\varphi : A \rightarrow B$ such that $\varphi(e_x) = f_x$ for each $x \in X$. The remarks above 
show that the definition of $a(\mA)$ induces an category isomorphism  $\Psi : X-\cat \rightarrow X-\alg$ .

In particular, when $X$ is a unitary set  the category $X-\cat$ coincides with the category of unital algebras with unital morphisms.
\end{exmp}

A non-zero \textbf{walk} $w(f_n,\epsilon_n), \dots ,(f_1,\epsilon_1)$ in $\mC$ is a sequence of linked non-zero virtual morphisms, that is: each $f_i$ is a morphism in the category and each $\epsilon_i$ is $\pm 1$, $s(f_{i}, 1)= s(f_i)$, $t(f_{i}, 1)= t(f_i)$, $s(f_{i}, -1)= t(f_i)$ and $t(f_{i}, -1)= s(f_i)$. The target object of each virtual morphism of $w$ coincides with the source object of the following one, namely $s(f_{i+1}, \epsilon_{i+1})= t(f_{i}, \epsilon_{i})$ for $i=1,\dots ,n-1$. The walk $w$ has source object $s(f_1, \epsilon_1)$ and target object $t(f_n, \epsilon_n)$ and we say that these objects are joined by $w$. 
 
\begin{defn} A $k$-category $\mC$ is called {\bf connected} if given any pair of objects $x$ and $y$, there is a non-zero walk in $\mC$ from $x$ to $y$. If $\mC$ is not connected, then $\mC_{0}$ is the disjoint union of subsets $X_i$, and $\yCx =0$ whenever $x,y$ are not both in the same component $X_i$. 
\end{defn}

\begin{defn} \cite{M}
A {\bf $k$-semicategory} or {\em $k$-category not necesssarily with units} $\mC$ consists of
a collection $\mC_0$ of objects,
a collection $\mC_1$ of morphisms,
two maps $s,t:\mC_1 \to \mC_0$ called {\em source} and {\em target},
such that composition of morphisms, whenever possible, is associative.
We also suppose that for each $x$ and $y \in \mC_0$, the morphism space $\yCx:=\Hom_{\mC}(x,y)$ is a $k$-vector space.
\end{defn}

There are lots of natural examples of semicategories.  We will exhibit our motivating example in Subsection \ref{ejemploideal}, 
but first we mention two examples  related to non-unital algebras.

\begin{exmp}
Let $A$ be a non unital $k$-algebra , then, taking $\mC_0=\{\bullet \}$ and $\mC_1=A$, we obtain a $k$-semicategory.
\end{exmp}

\begin{exmp} Let $A$ and $B$ be two nonunital idempotent $k$-algebras, that is $A^2 =A$ and $B^2 =B$. It is possible to define a Morita context between $A$ and $B$ in this case \cite{GS}. A Morita context is a sextuple $(A,B,M,N, \tau , \sigma)$, where $M$ is a unital $A-B$ bimodule (that is, it verifies $M=AM=MB$),  $N$ is a unital $B-A$ bimodule, the $k$-linear map $\tau : M\otimes_{B} N\rightarrow A$ is an $A-A$ bimodule map and the $k$-linear map $\sigma : N\otimes_{A}M \rightarrow B$ is a $B-B$ bimodule map such that
\begin{eqnarray}
\label{associativityofmorita}
m_1 \sigma (n\otimes m_2) &=& \tau (m_1 \otimes n) m_2, \qquad \forall m_1 , m_2 \in M, \; n\in N,\nonumber \\
n_1 \tau (m\otimes n_2 )  &=& \sigma (n_1 \otimes m ) n_2, \qquad \forall m\in M, \; n_1 , n_2 \in N.
\end{eqnarray}
The above data provides a new associative algebra, the {\em linking algebra}
\[
\mathcal{A}=\left( \begin{array}{cc} A & N \\ M & B \end{array} \right) ,
\]
with multiplication given by
\begin{equation}
\label{linking}
\left( \begin{array}{cc} a_1 & n_1 \\ m_1 & b_1 \end{array} \right)
\left( \begin{array}{cc} a_2 & n_2 \\ m_2 & b_2 \end{array} \right) =
\left( \begin{array}{cc} a_1 a_2 +\sigma (n_1 \otimes m_2 ) & a_1 n_2 + n_1 b_2 \\ 
m_1 a_2 +b_1 m_2 & b_1 b_2 +\tau (m_1 \otimes n_2 )\end{array} \right) .
\end{equation}
The associativity of $\mathcal{A}$ is easily verified using relations 
(\ref{associativityofmorita}). 

The algebra $\mathcal{A}$ can be viewed as a $k$-semicategory with $\mathcal{A}_0 =\{ A,B \}$, $\mathcal{A}_1$ given by the morphisms spaces ${}_{A}\mathcal{A}_{A}=A$, ${}_{B} \mathcal{A}_{B}=B$, ${}_{A}\mathcal{A}_{B}=M$ and ${}_{B}\mathcal{A}_{A}=N$ and compositions compatible with matrix multiplication 
(\ref{linking}).
\end{exmp}

\medskip

Similarly , we define:

\begin{defn}
A {\bf $k$-semifunctor} $F$ from a $k$-semicategory $\mC$ to a $k$-semicategory $\mD$ is a $k$-linear map sending each object $x\in \mC_0$ 
to an object $F(x)\in \mD_{0}$ and each morphism $f:x\to y$ in $\yCx$ to a morphism $F(f)$ in $_{F(x)}\mD_{F(y)}$, such that $F$ preserves composition.
\end{defn}

One of the main  $k$-semifunctors that we are going to use is: 

\begin{exmp}
Take $F$ as the embedding of a $k$-linear category into another $k$-linear category, where $F$ maps the identity at some object $x\in \mC_0$ to a nontrivial idempotent endomorphism of $F(x)$. Then $F$ is a semifunctor but not a functor.
\end{exmp}

\begin{exmp}
A slightly different situation is the following: consider a $k$-algebra $A$ with a 
central idempotent $e$ and define de ideal $I=eAe$.
Let $\mC_{A}$ be the $k$-category with one object and $A$ as $k$-vector space 
of morphisms, and let $\mC_{I}$  be the $k$-semicategory with one object and $I$ as 
set of morphisms.
Then the embedding of $\mC_{I}$ into $\mC_{A}$ is a $k$-semifunctor.
\end{exmp}

We will also need to consider equivalences of semicategories.

\begin{defn} Given two $k$-semifunctors $F,G :\mathcal{A}\rightarrow \mathcal{B}$, a {\bf natural transformation} between $F$ and $G$ is a family of morphisms $\{ \alpha_x \in {}_{G(x)}\mathcal{B}_{F(x)} |x\in \mathcal{A}_0 \}$ such that, for any 
${}_y f_{x} \in {}_y \mathcal{A}_{x}$ the following diagram commutes
\[
\xymatrix{
F(x) \ar[d]_{\alpha_x } \ar[r]^{F({}_{y}f_{x})}& F(y)  \ar[d]^{ \alpha_y }\\
G(x) \ar[r]^{G({}_{y}f_{x})}& G(y) \\
} 
\]
A natural transformation, $\alpha$, between two semifunctors $F$ and $G$ is a {\bf natural isomorphism} if $\alpha_x \in {}_{G(x)}\mathcal{B}_{F(x)}$ is an isomorphism for every $x\in \mathcal{A}_0$. A {\bf semiequivalence} between two $k$-semicategories $\mA$ and $\mB$ is a $k$-semifunctor $F:\mA \to \mB$ such that there exists   
a $k$-semifunctor $G:\mB \to \mA$ verifying that $F\circ G$ and $G\circ F$ are naturally isomorphic to the respective identity semifunctors. 
\end{defn}
We are also going to use the notions of $X$-semicategory and $X$-semifunctor.
\smallskip

Finally, we recall the concept of ideal of a $k$-category.

\begin{defn} An {\bf ideal} ${\cal I}$ of a $k$-category $\mC$ is a
collection of subvector spaces ${}_y{\cal I}_x$ of each morphism space ${}_y\mC_x$, such that the image of the composition map ${}_z\mC_y \otimes {}_y{\cal I}_x \to {}_z\mC_x$ is contained in ${}_z{\cal I}_x$ and the image of the composition map 
${}_y{\cal I}_x \otimes {}_x\mC_u \to {}_y\mC_u$ is contained in ${}_y{\cal I}_u$ for each choice of objects. 
\end{defn}

The same notion makes sense in $X$-semicategories.

\section{Partial $H$-module categories}\label{lambdas y ejemplos}

In this section we are going to define our main object of study, namely, a partial action of a Hopf algebra $H$ on a $k$-category $\mA$. This definition is motivated both by the notion of a partial action of a Hopf algebra $H$ on a unital algebra $A$, as first introduced in \cite{CJ}, and by the concept of $H$-module category, as defined in \cite{CS}, which we recall next. 

A $k$-category $\mA$ is an $H$-{\bf module category} if for each $x, y \in {\mA}_{0}$ there is a $k$-map 
$H \otimes {}_{y}{\mA}_{x} \rightarrow {}_{y}{\mA}_{x} $
sending $h \otimes f$ to $h \cdot f$,  such that 
 
\begin{align}
  \label{H1:condicion1}
& 1_{H} \cdot \yfx = \yfx, \\
  \label{H2:condicion2}
& h \cdot ( \zfy \circ \ygx) =  \sum (h_{(1)} \cdot\zfy) \circ (h_{(2)} \cdot \ygx), \\
  \label{H3:condicion3}
& h \cdot (k \cdot \yfx)  =  (hk) \cdot \yfx, \\
\label{H4:condicion4}
& h \cdot \ _{x}1_{x}  =  \epsilon(h) \cdot \ _{x}1_{x}.
\end{align}

\subsection{Partial $H$-module categories}

\begin{defn}\label{partialHmodulecat}
Let $\mA$ be a $k$-linear category and $H$ a Hopf algebra. We say that there exists a {\bf partial action} of $H$ on $\mA$ if:
\benu
\item $H$ acts trivially on objects. 
\item For every $x, y \in \mA_0$, there is a $k$-linear map $\alpha: H \otimes \yAx \rightarrow \yAx$, denoted by 
$\alpha(h \otimes f) = h \cdot f$, satisfying:
  \begin{align}
  \label{partialH1:condicion1}
& 1_{H} \cdot \yfx = \yfx, \\
  \label{partialH2:condicion2}
& h \cdot ( \zfy \circ \ygx) = \sum (h_{(1)} \cdot\zfy) \circ (h_{(2)} \cdot \ygx), \\
  \label{partialH3:condicion3}
& h \cdot (k \cdot \yfx) = \sum (h_{(1)} \cdot  \ \yidy) \circ ((h_{(2)}k) \cdot  \yfx) \\
& = \sum ((h_{(1)} k) \cdot \yfx) \circ (h_{(2)} \cdot \ \xidx) \notag.
\end{align}
\enu

In this case, we will also say that $\mA$ is a {\bf partial $H$-module  category}. 

\end{defn}

Notice that an action of $H$ on $\mA$ is clearly a partial action. On the other hand, given a partial action of $H$ on $\mA$, whenever \eqref{H4:condicion4} holds too, it is in fact an action. 
\medskip

We derive the following from ~\eqref{partialH2:condicion2} and ~\eqref{partialH3:condicion3}:
\beqna \label{useful}
h \cdot  (\zfy  \circ (k \cdot \ygx )) & = &\sum (h_{(1)} \cdot  \zfy ) \circ ((h_{(2)}k) \cdot  \ygx) 
\eqna
In fact,
\beqnast
h \cdot (\zfy \circ  (k \cdot \ygx )) & = & \sum (h_{(1)} \cdot \zfy) \circ ((h_{(2)} \cdot (k  \cdot  \ygx)) \\
& = & \sum (h_{(1)} \cdot \zfy) \circ ((h_{(2)} \cdot  \yidy ) \circ (h_{(3)}k  \cdot  \ygx)) \\
& = &\sum (h_{(1)} \cdot  (\zfy \circ  \yidy)) \circ ((h_{(2)}k) \cdot  \ygx).\\
& = &\sum (h_{(1)} \cdot  \zfy ) \circ ((h_{(2)}k) \cdot  \ygx).
\eqnast

\begin{prop} Let $H$ be a Hopf algebra and $\mathcal{A}$ be a partial $H$-module category. Then, for every $x\in \mathcal{A}_0$ the morphism space ${}_x\mathcal{A}_x$ is a partial $H$-module algebra.
\end{prop}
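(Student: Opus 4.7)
The plan is to show that each endomorphism space $\xAx$, equipped with composition as its multiplication and $\xidx$ as its unit, together with the restriction of the partial action map $\alpha : H \otimes \xAx \to \xAx$, satisfies the three defining axioms of a partial $H$-module algebra as introduced in \cite{CJ}. So the strategy is pure specialization: I will simply take the axioms (\ref{partialH1:condicion1})--(\ref{partialH3:condicion3}) from Definition \ref{partialHmodulecat} and apply them with $x = y = z$.

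First I would verify that $\xAx$ is a unital $k$-algebra; this is standard since $\xAx = \Hom_\mA(x,x)$ in a $k$-linear category. Then I would check the unitality axiom of the partial action on an algebra: for any ${}_xf_x \in \xAx$, equation (\ref{partialH1:condicion1}) with $y=x$ gives $1_H \cdot {}_xf_x = {}_xf_x$, which is exactly what is required.

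Next I would verify multiplicativity: for ${}_xf_x, {}_xg_x \in \xAx$, the product in the algebra $\xAx$ is ${}_xf_x \circ {}_xg_x$, so applying (\ref{partialH2:condicion2}) with $z = y = x$ yields
\[
h \cdot ({}_xf_x \circ {}_xg_x) = \sum (h_{(1)} \cdot {}_xf_x) \circ (h_{(2)} \cdot {}_xg_x),
\]
which is the second axiom of a partial $H$-module algebra. Finally, for the twisted associativity axiom, I specialize (\ref{partialH3:condicion3}) to $y = x$ to obtain
\[
h \cdot (k \cdot {}_xf_x) = \sum (h_{(1)} \cdot \xidx) \circ ((h_{(2)} k) \cdot {}_xf_x),
\]
which, since $\xidx$ is precisely the unit of the algebra $\xAx$, is exactly the third defining axiom.

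I do not expect any genuine obstacle: the proof is essentially a tautology, because the axioms of a partial $H$-module category were modelled on those of a partial $H$-module algebra and reduce to them whenever we restrict to the endomorphism monoid of a single object. The only small subtlety to mention is that (\ref{partialH3:condicion3}) has the symmetric alternative form involving $\xidx$ on the right, and both forms collapse to the standard partial module algebra axiom in the single-object case; I would therefore conclude with a brief remark that this justifies viewing partial $H$-module categories as a genuine many-object generalization of partial $H$-module algebras.
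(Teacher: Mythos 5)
Your proposal is correct and matches the paper's own argument exactly: both proofs simply specialize axioms (\ref{partialH1:condicion1})--(\ref{partialH3:condicion3}) to the single object $x=y=z$, noting that composition makes $\xAx$ a unital algebra with unit $\xidx$, and both close with the observation that the symmetric form of (\ref{partialH3:condicion3}) makes the resulting partial action on $\xAx$ symmetric.
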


\begin{proof} The product in ${}_x\mathcal{A}_x$ is given by the composition, turning  this vector space into an algebra with unit ${}_x1_{x}$. The condition (\ref{partialH1:condicion1}) leads to the fact that $1_H \cdot f =f$, for every $f\in {}_x\mathcal{A}_x$. Let $f,g\in {}_x\mathcal{A}_x$ and $h,k\in H$. Then, using (\ref{partialH2:condicion2}), we have
\[
h\cdot (g\circ f)=\sum (h_{(1)}\cdot g)\circ (h_{(2)}\cdot f)
\]
and, by using (\ref{partialH3:condicion3}) we get
\[
h\cdot (k \cdot f)=\sum (h_{(1)}\cdot {}_x1_{x})\circ ((h_{(2)}k)\cdot f) .
\]
Therefore, ${}_x\mathcal{A}_x$ is a partial $H$-module algebra. Moreover, the partial action is symmetric, since (\ref{partialH3:condicion3}) also implies that 
$
h\cdot (k \cdot f)=\sum ((h_{(1)} k) \cdot f )\circ (h_{(2)}\cdot {}_x1_{x}) .
$
\end{proof}

\begin{cor} 
A partial action of a Hopf algebra $H$ on an $X$-category, with $X$ a unitary set coincides with the partial action of $H$ on the algebra defined by this category.
\end{cor}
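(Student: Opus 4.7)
The plan is quite short, since the corollary is essentially an unwinding of definitions: when $X = \{x\}$ is a singleton, an $X$-category $\mA$ has a single morphism space $\xAx$, which is precisely the $k$-algebra $a(\mA)$ associated with the category (under composition, with unit $\xidx$), so one only needs to check that the three axioms defining a partial $H$-module category collapse, in this case, to the usual axioms of a (symmetric) partial $H$-module algebra in the sense of \cite{CJ}.

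The forward implication is immediate: it is the content of the preceding proposition applied to the unique object $x$, which asserts that $\xAx$ inherits the structure of a symmetric partial $H$-module algebra from any partial $H$-module category structure on $\mA$. For the converse, given a partial $H$-module algebra action $\alpha : H \otimes \xAx \to \xAx$, define the category action by the same map $\alpha$. Since $x=y=z$ is forced, axiom \eqref{partialH1:condicion1} reduces to $1_H \cdot f = f$; axiom \eqref{partialH2:condicion2} reduces to $h \cdot (f \circ g) = \sum (h_{(1)} \cdot f) \circ (h_{(2)} \cdot g)$; and the two equalities in axiom \eqref{partialH3:condicion3} reduce to the symmetry conditions
\[
h \cdot (k \cdot f) \; = \; \sum (h_{(1)} \cdot \xidx) \circ ((h_{(2)} k) \cdot f) \; = \; \sum ((h_{(1)} k) \cdot f) \circ (h_{(2)} \cdot \xidx),
\]
which are precisely the defining relations of a symmetric partial $H$-module algebra on $\xAx$.

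The only point requiring mild care is the symmetry built into axiom \eqref{partialH3:condicion3}: the categorical definition already encodes both forms, and the corresponding notion of partial $H$-module algebra one must compare against is the symmetric one (which matches the convention used throughout the paper, and which the preceding proposition shows is forced when the action comes from a category). Thus the two structures are in bijection via the identity map on the data, and the corollary follows.
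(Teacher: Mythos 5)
Your proposal is correct and follows essentially the same route as the paper, which states the corollary without proof as an immediate consequence of the preceding proposition; you simply spell out the converse direction (that the axioms literally coincide when $X$ is a singleton) and correctly flag that the algebra notion being matched is the \emph{symmetric} partial $H$-module algebra, consistent with the paper's remark that \eqref{partialH3:condicion3} forces symmetry.
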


Given a finite dimensional Hopf algebra $H$
 and a partial action of $H$ on a $k$-category $\mC$, for each pair of elements $x,y \in \mC_0$
we have a $k$-linear map
\[ H \stackrel{\Pi}\longrightarrow End_k (\ _{y}{\mathcal C}_{x}) \]
\[ h \mapsto \pi_{h}  \]
defined by $\pi_h ({}_yf_x) = h \cdot {}_yf_x$.

The conditions
\eqref{partialH1:condicion1}, \eqref{partialH2:condicion2} and \eqref{partialH3:condicion3} may be formulated in terms of $\Pi$.
Even if $\Pi$ depends on $x$ and $y$, we will just denote it by $\Pi$ to simplify the notation.   When $x = y$, we will denote  
 $\lambda_h^{x} = \pi_h \cdot \ _{x}1_{x}$. The condition \eqref{partialH3:condicion3} in the definition of partial $H$-action implies that for all $h$, $k$ in $H$
  \begin{equation}
\label{pis}  
\pi_k\pi_h= \sum_k \lambda^{x}_{k_{(1)}} \pi_{(k_{(2)}h)} = \sum_k  \pi_{(k_{(1)}h)} \lambda^{x}_{k_{(2)}}.
\end{equation} 

Note that the action is global if and only if 
$\lambda_h^{x} = \epsilon(h)  \ _{x}1_{x}$, for all $h \in H$ and for all $x \in {\mathcal C}_{0}$.

Given $x \in {\mathcal C}_{0}$, we have $\Lambda^{x} \in Hom_{k}(H, \ _{x}{\mathcal C}_{x})$ defined by $\Lambda^{x}(h) = \lambda^{x}_{h}$. Note that $Hom_{k}(H,_{x}{\mathcal C}_{x})$ is an algebra with the convolution product. It is not difficult to verify that $\Lambda^{x}$ is an idempotent and that $\Lambda^{x} (1_{H} )= \ _{x}1_{x}$.

The unit $\eta^{x}:k\to \ _{x}{\mathcal C}_{x}$ is an algebra morphism
 inducing 
$\eta^{x}_*: Hom_k(H, k) \to Hom_k(H, \ _{x}{\mathcal C}_{x})$.
Given $\Lambda^{x} = \eta^{x}_*(\widetilde{\Lambda})$ with $\widetilde{\Lambda} \in Hom_k(H, k)$, evaluating the 
equality (\ref{pis}) at $_{x}1_{x}$, we obtain that 
\begin{equation}
\lambda^{x}_k\lambda^{x}_h= \sum_k \lambda^{x}_{k_{(1)}} \lambda^{x}_{(k_{(2)}h)} 
= \sum_k  \lambda^{x}_{(k_{(1)}h)} \lambda^{x}_{k_{(2)}}
\label{eq:lambdas}.
\end{equation}

Starting with a partial action of a finite dimensional Hopf algebra $H$ 
 we have the map $\Pi$ defined as before, inducing then a $k$-linear morphism $\Lambda^x$ for each $x\in \mC_0$.
Whenever $\Lambda^x$ is of type $\Lambda^{x} = \eta^{x}_*(\widetilde{\Lambda})$ with $\widetilde{\Lambda} \in Hom_k(H, k)$ we will say that the partial action is {\bf induced by} $k$.
In particular, for a partial $H$-module category induced by $k$, we have for each $x\in \mC_0$ that the following equalities hold: 
\begin{enumerate}
\item[(a)] $\Lambda^x(1_H) = {}_x1_x$,
\item[(b)] $\Lambda^x(h) = \sum \Lambda^x(h_{(1)})\Lambda^x(h_{(2)})$,
\item[(c)] $\Lambda^x(h)\Lambda^x(k) = \sum \Lambda^x(h_{(1)})\Lambda^x(h_{(2)}k) = \sum \Lambda^x(h_{(1)}k)\Lambda^x(h_{(2)}).$
\end{enumerate}

\subsection{Partial actions of $k^G$ on $k$}\label{lambdas}

As we know, given a group $G$, a $G$-graded $k$-category $\mathcal C$ is the same as a $kG$-comodule category and 
if $G$ is finite this is the same as a $k^G$-module category (see \cite{CS}).
It follows from the axioms that for all $x \in {\mathcal C}_{0}$,  $\ _{x}1_{x}$ is homogeneous of degree $e_G$.
We have already noticed that if $\mathcal C$ is a partial $k^G$-module category such that for any $h\in k^G$, 
$h \ \cdot \ _{x}1_x = \epsilon(h) \ _{x}1_{x}$, $ \forall h \in H$ and $\forall x \in {\mathcal C}_{0}$, then ${\mathcal C}$ is in fact a $k^G$-module category.  So the first condition needed to have a partial action of $k^G$ - let's call it a partial grading - which is not global is that $_{x}1_x$ is not homogeneous of degree $e_G$ for some $x$.  

Given a 
partial $k^G$-module category induced by  $k$, where  $G$ is a finite group and $k^G$ has multiplicative basis $\{p_{g}\}_{g \in G}$,  writing $\lambda^x_g = \Lambda^x(p_g)$ for 
for each $x\in \mC_0$, the equations (a), (b) and (c) above translate (restricted to this basis) as 
\begin{enumerate}
\item[(a')] $\sum_g \lambda^x_g = 1$,
\item[(b')] $\lambda^x_g = \sum_h \lambda^x_{gh^{-1}}\lambda^x_h$,
\item[(c')] $\lambda^x_h \lambda^x_g = \lambda^x_{hg^{-1}} \lambda^x_g  =  \lambda^x_{g^{-1}h} \lambda^x_g $
\end{enumerate}
for every $g,h \in G$. 

Next we will exhibit an example of partial $H$-module category which is not global.  

\begin{exmp} \label{exemplodeparcial}
Let $H=k^{C_n}$, where $k$ is a field,  $char(k)$ does not divide $n$ and $C_n$ is the cyclic group of order $n$ with generator $t$.
As it is well-known, $H$ is a Hopf algebra with multiplicative $k$-basis $\{p_{t^i}\}_{0\le i\le n-1}$, multiplication determined by $p_{t^i}.p_{t^j}=\delta_{i,j}p_{t^{j}}$ and comultiplication 
$\Delta(p_{t^i})= \sum_{j=0}^{n-1}p_{t^{i-j}}\otimes p_{t^j}$.
Let $\mathcal{C}$ be the $k$-category with $\mathcal{C}_0=\{1,2,3\}$ and  ${}_2\mathcal{C}_1$, ${}_3\mathcal{C}_2$ and
${}_1\mathcal{C}_3$ are $k$-vector spaces with generators  $\alpha$, $\beta$ and $\gamma$, respectively. Also,  ${}_i\mathcal{C}_i$ is the $k$-vector space with generator $\ _{i}1_{i}$ ($i=1,2,3$), such that
$\beta\alpha=0=\gamma\beta=\alpha\gamma$, endowed with the following partial action of $H$ on $\mathcal{C}$:
\[ p_g.a= \frac{1}{n}a, \forall a \in \  _{x}\mathcal{C}_{y}, \forall g \in C_{n}. \]
A straightforward computation shows that the above formula defines a partial action of $H$ on $\mathcal{C}$. Since $p_g \cdot \ _{x}1_{x} \neq 0$, $\forall x \in {\mathcal C}_{0}$ and  for all $g\in C_n$, it is clear that this action is not global.
\end{exmp}

At a first glance, this partial action does not appear as a very interesting one. However,
we will deduce that there is not a lot of choices.

From equation (c') we deduce that if $\lambda_h^{x} \neq 0$ for all $h$, then $\lambda_g^{x}=\lambda_e^{x}$, for all $g\in G$, and thus, in this situation $\Lambda^{x}$ is constant. As a consequence of the equality $\Lambda^{x}(1_{k^{G}})={}_{x}1_{x}$, we get that $\lambda_g^{x}={}_{x}1_{x}/|G|$, for all $g\in G$.

This shows that the partial action of the Example \eqref{exemplodeparcial} is the only possible one if we consider partial actions induced by partial actions taking values in $k^{*}$.
Note that in general, $\lambda_e^{x}$ is a central element of ${}_{x}{\mathcal C}_{x}$.

\medskip

We summarize these  comments in the following proposition.

\begin{prop}
Given a finite group $G$ and a field $k$ such that $char(k)$ does not divide $|G|$, any partial action of $k^{G}$ on ${}_x\mC_x$, for all $x\in \mC_0$ which is induced by a partial action of $k^{G}$ on $k^{*}$, defines a unique partial action on the  $k$-category $\mathcal C$, given by $p_{g} \cdot \ _{y}f_{x} = \frac{1}{|G|}  \ _{y}f_{x}, $ for all $g \in G$, $x, y \in {\mathcal C}_{0}$, for all $_{y}f_{x} \in \ _{y}{\mathcal C}_{x}$. 
\end{prop}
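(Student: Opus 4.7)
The plan is to combine the numerical constraints already derived above (equations (a'), (b'), (c')) with the compatibility axiom \eqref{partialH2:condicion2} applied to the trivial factorization $\yfx = \yidy \circ \yfx$. This essentially forces the scalar $1/|G|$ to propagate from the endomorphism algebras $\xCx$ to every morphism space $\yCx$.

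First I would recall that the hypothesis — the partial action on each $\xCx$ comes from values in $k^{*}$ — means $\lambda_g^{x}\in k^{*}\cdot\xidx$ for every $g\in G$. Equation (c') then forces $\lambda_g^{x}=\lambda_e^{x}$ for all $g$, and substituting this into (a') together with the hypothesis that $\mathrm{char}(k)\nmid |G|$ gives $\lambda_g^{x}=\frac{1}{|G|}\xidx$ for all $g\in G$ and $x\in\mC_0$. This pins down the action on the identities.

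Next, for uniqueness of the extension to the whole category: given any $\yfx$ and any $g\in G$, the axiom \eqref{partialH2:condicion2} applied to $\yfx=\yidy\circ\yfx$ yields
\begin{equation*}
p_g\cdot\yfx \;=\; \sum_{j\in G}\bigl(p_{gj^{-1}}\cdot\yidy\bigr)\circ\bigl(p_j\cdot\yfx\bigr) \;=\; \frac{1}{|G|}\sum_{j\in G}p_j\cdot\yfx \;=\; \frac{1}{|G|}\bigl(1_H\cdot\yfx\bigr) \;=\; \frac{1}{|G|}\yfx,
\end{equation*}
where I used the first step to evaluate the $\lambda^{y}$ factor, the axiom \eqref{partialH1:condicion1}, and the fact that $\sum_{j}p_j=1_H$ in $k^G$. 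This shows the formula $p_g\cdot\yfx=\frac{1}{|G|}\yfx$ is the only possible extension.

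Finally, for existence, I would verify that the formula does define a partial action by checking axioms \eqref{partialH1:condicion1}--\eqref{partialH3:condicion3} directly. Axiom \eqref{partialH1:condicion1} follows since $1_H\cdot\yfx=\sum_g p_g\cdot\yfx=|G|\cdot\frac{1}{|G|}\yfx=\yfx$. Axiom \eqref{partialH2:condicion2}: with $\Delta(p_g)=\sum_j p_{gj^{-1}}\otimes p_j$, the right-hand side is $\sum_j\frac{1}{|G|^{2}}(\zfy\circ\ygx)=\frac{1}{|G|}(\zfy\circ\ygx)$, matching the left-hand side. Axiom \eqref{partialH3:condicion3}: the left-hand side is $\frac{1}{|G|^{2}}\yfx$, while the right-hand side collapses via the orthogonality $p_j p_h=\delta_{jh}p_h$ to $\frac{1}{|G|}(p_h\cdot\yfx)=\frac{1}{|G|^{2}}\yfx$. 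The only delicate point — and therefore the main thing to be careful about — is the bookkeeping in the uniqueness step, where one must be sure that the values $\lambda_g^{y}$ appearing in \eqref{partialH2:condicion2} are exactly the scalars determined in the first paragraph and not some a priori different quantity attached to the morphism space $\yCx$; but this is exactly what the hypothesis on the endomorphism algebras guarantees.
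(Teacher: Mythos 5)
Your proposal is correct and follows essentially the same route as the paper: the scalars $\lambda^x_g$ are pinned to $1/|G|$ via (a') and (c'), and the extension to $\yCx$ is forced by applying \eqref{partialH2:condicion2} to $\yfx = \yidy \circ \yfx$ and summing over the coproduct of $p_g$, which is exactly the paper's computation. Your explicit verification of the three axioms for existence merely fills in what the paper dismisses as ``straightforward,'' and it checks out.
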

\begin{proof} 
Most of the computations are straightforward.
The only thing left to prove is the equality 
$p_{g} \cdot \ _{y}f_{x} = \frac{1}{|G|}  \ _{y}f_{x}, $ for all $g \in G$, $x\neq y \in {\mathcal C}_{0}$, for all $_{y}f_{x} \in \ _{y}{\mathcal C}_{x}$.
For this,  $p_{g} \cdot \ _{y}f_{x} = p_{g} \cdot ({}_y1_y \ _{y}f_{x})= 
\sum_{h\in G} (p_{gh^{-1}}{}_y1_y)( p_h \cdot \ _{y}f_{x})= 
\sum_{h\in G} \frac{1}{|G|} p_h\cdot  \ _{y}f_{x}= \frac{1}{|G|} \ _{y}f_{x}.$ 
\end{proof}

Once this fact established, we generalize it as follows.

\begin{thm}\label{graduacoesparciais}  Let $\mC$ be a $k$-linear category and let   
$(\mC_0 \times \mC_0)^*$ denote the set of pairs $(x,y) \in \mC_0 \times \mC_0$
such that $x \neq y$ and ${}_y\mC_x  \neq 0$. Let $G$ be a finite group and 
suppose that $char(k)$ does not divide $|G|$. Given a partial $k^{G}$-action on $\mC$ induced by partial actions of $k^{G}$ on $k$, let $G_x = \{g \in G; \lambda^x_g \neq 0\}$. 
 Then \\
 (i) Each $G_x$ is a subgroup of $G$, and \\
 (ii) There is a family 
$\{{}_yt_x\}_{(x,y) \in (\mC_0 \times \mC_0)^*}$ of elements of $G$ such that ${}_y \mC_x \neq 0$ implies  $G_y = {}_yt_x \ G_x \ {}_y^{}t_x^{-1}$. \\
Moreover, if the action of $k^G$ on each ${}_y\mC_x$ is also by scalar multiplication, i.e., if each linear map  $\pi_g : {}_y\mC_x \rightarrow {}_y\mC_x  $ is a multiple of $Id_{{}_y \mC_x}$, then \\
(iii)  $({}_zt_y {}_yt_x) G_x = {}_zt_x G_x$
whenever the composition ${}_z\mC_y \otimes {}_y\mC_x \rightarrow {}_z\mC_x$ is not zero. \\
Conversely, given a pair $\left(\{G_x\}_{x \in \mC_0}, \{{}_yt_x\}_{(x,y) \in (\mC_0 \times \mC_0)^*} \right)$ satisfying (i), (ii) and (iii) one can define  a partial $k^{G}$-action on $\mC$.
\end{thm}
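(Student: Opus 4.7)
I will prove the four claims in sequence. First, I plan to extract explicit formulas for the scalars $\lambda^x_g$ from (a')--(c'): setting $a = b = h$ in (c') gives $(\lambda^x_h)^2 = \lambda^x_e \lambda^x_h$, so $\lambda^x_h \in \{0, \lambda^x_e\}$, and then (a') forces $\lambda^x_e = 1/|G_x|$, invertible since $\operatorname{char} k \nmid |G|$. With $\lambda^x_g = 1/|G_x|$ exactly on $G_x$, part (i) follows by reading (c') as $\lambda^x_{ab^{-1}} \lambda^x_b = \lambda^x_a \lambda^x_b$: if $a, b \in G_x$, both sides are nonzero, forcing $ab^{-1} \in G_x$, which is the standard subgroup criterion.

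Next, for (ii), I set $\pi^{y,x}_k = p_k \cdot (-) : {}_y\mC_x \to {}_y\mC_x$ and expand $p_k \cdot (p_h \cdot f)$ using $\Delta(p_k) = \sum_t p_{kt^{-1}} \otimes p_t$ and the orthogonality $p_s p_h = \delta_{s,h} p_h$, which collapses each of the two sums in axiom (\ref{partialH3:condicion3}) to a single term and produces the key identity
\[
\pi^{y,x}_k \pi^{y,x}_h = \lambda^y_{kh^{-1}}\, \pi^{y,x}_h = \lambda^x_{h^{-1}k}\, \pi^{y,x}_h.
\]
Since $\sum_h \pi^{y,x}_h = \operatorname{Id}$ on ${}_y\mC_x \neq 0$, some $\pi^{y,x}_h$ is nonzero; for such an $h$ I equate scalar coefficients to obtain $\lambda^y_{kh^{-1}} = \lambda^x_{h^{-1}k}$ for every $k \in G$. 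The substitution $k = hg$ rewrites this as $\lambda^y_{hgh^{-1}} = \lambda^x_g$, i.e.\ $G_y = h G_x h^{-1}$, so I set ${}_yt_x := h$.

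For (iii), under the scalar hypothesis $\pi^{y,x}_g = \mu^{y,x}_g \operatorname{Id}$, the identity above specializes to $\mu^{y,x}_k = \lambda^y_{kh^{-1}}$ for any $h$ in the support $T_{y,x} := \{g : \mu^{y,x}_g \neq 0\}$; hence $T_{y,x}$ is the left coset $G_y {}_yt_x = {}_yt_x G_x$ and $\mu^{y,x}_k = 1/|G_x|$ there. Applying axiom (\ref{partialH2:condicion2}) to a nonzero composition $f \circ g$ with $f \in {}_z\mC_y$, $g \in {}_y\mC_x$ yields the convolution identity $\mu^{z,x}_k = \sum_t \mu^{z,y}_{kt^{-1}} \mu^{y,x}_t$. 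A short coset computation then shows that $T_{z,y} T_{y,x} = {}_zt_y {}_yt_x G_x$ and that each element of this coset is realized by exactly $|G_x|$ pairs $(k_1, k_2) \in T_{z,y} \times T_{y,x}$ with $k_1 k_2 = k$, so the convolution sum equals $|G_x|/|G_x|^2 = 1/|G_x| \neq 0$. Therefore $T_{z,y} T_{y,x} \subseteq T_{z,x} = {}_zt_x G_x$, and since both sides are single left $G_x$-cosets of the same size they must coincide, yielding (iii).

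For the converse, I will adopt the convention ${}_xt_x := e$ and set $p_g \cdot f := (1/|G_x|) f$ for $f \in {}_y\mC_x$ and $g \in {}_yt_x G_x$, with $0$ otherwise. The three axioms then reduce respectively to $|T_{y,x}| = |G_x|$, to the right-$G_x$-invariance of $T_{y,x}$, and to the same convolution identity used above, now run in reverse and guaranteed by (iii). The main obstacle I expect is the uniform-multiplicity count in stage (iii): one must verify not only that $T_{z,y} T_{y,x}$ is a single left coset, but that each element in it arises from exactly $|G_x|$ ordered factorizations, so that no cancellation can spoil the sum in positive characteristic. The hypothesis $\operatorname{char} k \nmid |G|$ is used precisely at that point to keep $1/|G_x|$ nonzero and invertible.
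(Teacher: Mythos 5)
Your proposal is correct and follows essentially the same route as the paper: the same collapsed identity $\pi_k\pi_h=\lambda^y_{kh^{-1}}\pi_h=\lambda^x_{h^{-1}k}\pi_h$ drives (i) and (ii), the supports are identified as the cosets ${}_yt_xG_x$ with value $1/|G_x|$, and (iii) and the converse rest on the same three-object convolution identity coming from axiom~\eqref{partialH2:condicion2}. The only (harmless) difference is that in (iii) you argue forward --- showing the product coset lands in the support and then invoking equal cardinalities, with the multiplicity count made explicit --- whereas the paper starts from an element of ${}_zt_xG_x$ and deduces that the sum over the other coset equals $1$.
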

\begin{proof}
Suppose we have a partial $k^{G}$-action induced by $k$. For each $x\in \mC_0$ let $\Lambda^x \in Hom_k(k^G, {}_x\mC_x)$ be the associated map and let $G_x = \{g \in G| \lambda_g^x \neq 0\}$. 
It follows from the equality 
$\lambda_k^{x} \lambda_g^{x} = \lambda_k^{x}\lambda_{gk^{-1}}^{x}$ that $\lambda_{g}^{x} = \lambda_{gk^{-1}}^{x}$  for all $g, k \in G_{x}$.   In particular, $\lambda_g^x = \lambda_{gg^{-1}}^x = \lambda_e^x$, and hence 
  $\lambda_e^x \in G_x$ and 
  $\Lambda^x$ is constant on $G_x$.
Moreover the same equality can be used to prove that $G_x$ is a subgroup of $G$. 
It follows from the equality 
$\displaystyle{\sum_g \lambda_g^x = {}_{x}1_{x}}$ that there is no choice for the common value of $\lambda^x_g$ 
for  $g \in G_x$: $\lambda_g^x$ must be equal to $1/|G_x|$.

We shall see that if $\yCx \neq 0$ then the groups $G_x$ and $G_y$ are conjugated. More precisely, there exists at least one nonzero $\pi_t \in End(\yCx)$, and for every such $t$ we have $G_y = t^{-1}G_xt$. In particular, if $G$ is abelian then the family $(G_x)_{x \in \mC'_0}$, where $\mC'$ 
is a connected component of $\mC$, is constant.
In fact, $\sum \pi_g  =  Id_{\yCx}$ implies that at least one $\pi_t$ is nonzero. The third condition of a partial action yields the equations 
\begin{equation} \label{eq:pspt}
\pi_s \pi_t  =  \lambda^y_{st^{-1}} \pi_t = \lambda^x_{t^{-1}s} \pi_t 
\end{equation}
and from these equations it follows that if $\pi_t \neq 0$ and $\yfx \in \yCx$ is such that $\pi_t \cdot \yfx \neq 0$ then 
$\lambda^y_{st^{-1}} \pi_t (\yfx) = \lambda^x_{t^{-1}s} \pi_t (\yfx)$ 
and therefore $\lambda^y_{st^{-1}}  = \lambda^x_{t^{-1}s} $
for each $s \in G$. This last equation implies that $\lambda^y_{g}  = \lambda^x_{t^{-1}gt}  \text{ for every } g \in G,$ then 
$G_y = tG_x t^{-1}$.  
Of course, the $t$'s are not uniquely determined. Anyway, choosing one ${}_yt_x $ for each $(x,y) \in (\mC_0 \times \mC_0)^*$ we obtain a family
$\{{}_yt_x\}_{(x,y) \in (\mC_0 \times \mC_0)^*}$ satisfying (ii).  

\bigskip 

Suppose now that 
the partial action on each ${}_y\mC_x$ is also induced by a partial action on $k$.

If $\pi_t \neq 0$ then $\pi_{tg} \neq 0$ if and only if $g \in G_x$; in fact,  equation (\ref{eq:pspt}) implies   
\begin{equation}\label{eq:supp(yCx)}
\pi_{(tg)} \pi_{t} = \lambda^x_{tgt^{-1}} \pi_{t} 
\end{equation}
 and hence $\pi_{tg} \neq 0$ iff $tgt^{-1} \in G_y$, i.e., iff $g \in G_x$.  Therefore 
\[
\{ h \in G; \pi_h \neq 0\} = {}_yt_x \ G_x
\]
and, since $\pi_{h}$ is a multiple of $Id_{{}_y\mC_x}$ for every $h \in G$ and $(x,y) \in (\mC_0 \times \mC_0)^*$ then it follows from equation (\ref{eq:supp(yCx)}) above that ${}\pi_h = 1/|G_x|$ if $h \in {}_yt_x \ G_x$ and ${}\pi_h = 0$ otherwise.

 Since we will have to deal  with three objects in the following, given 
 $(x,y) \in (\mC_0 \times \mC_0)^*$
 we will use briefly the notation ${}^y\pi_g^x$ for the map $\pi_g  \in \End({}_y\mC_x)$. Now, equation (\ref{partialH2:condicion2}) implies that 
\[
{}^z \pi^x_g = \sum_l  {}^z \pi^x_{gl^{-1}}   {}^y \pi^x_l\]
for every $g \in G$ and every triple $(x,z),(x,y)$ and $(y,z)$ in  $ (\mC_0 \times \mC_0)^*$.
If $g \in {}_zt_x G_x$, then  
${}^z \pi^x_g = (1/|G_x|) Id_{{}_z\mC_x}$, and 
hence
\[
(1/|G_x|) Id_{{}_z\mC_x} = \sum_l  {}^z \pi^x_{gl^{-1}}   {}^y \pi^x_l = \sum_{l \in \ {}_yt_x G_x}
{}^z \pi^y_{gl^{-1}} (1/|G_x|) Id_{{}_y\mC_x} 
\]
implying that $ \displaystyle\sum_{l \in\  {}_yt_x G_x} {}^z \pi^y_{gl^{-1}} = 1$.	Since $g = ({}_zt_x \ s)$ for some $s \in G_x$ and each ${}^z \pi^y_{h}$ is equal either to $1/|Gx|$ or zero, being nonzero iff $h \in {}_zt_y \ G_x$, we conclude that 
\[\{gl^{-1}; l \in {}_yt_x G_x\} 
= \{{}_zt_x \ h \ {}_y^{}t_x^{-1}; h \in  G_x\} = {}_zt_x \ G_x,\]
 which implies that ${}_zt_x = {}_zt_y {}_yt_x h \ $ for some $h \in G_x$, and hence ${}_zt_x G_x= ({}_zt_y {}_yt_x) G_x$, proving (iii). 

\bigskip 

In order to prove the second part of the theorem, which may be considered as a kind of "converse" 
to the first one, consider a pair  $\left((G_x)_{x \in \ca_0}, ({}_yt_x)_{(x,y) \in \mcst} \right)$
satisfying (i), (ii) and (iii), and let $n$ be the common number of elements of the groups $G_x$. Consider the families of scalars 
\beqnst
\lambda_g^x  =  
\left\{
\begin{array}{l}
\frac{1}{n} \text{ \qquad if } g \in G_x \\
0 \text{ \qquad otherwise}
\end{array}
\right.
\ \ \ \ 
{}^y\pi_g^x  =  
\left\{
\begin{array}{l}
\frac{1}{n} \text{\qquad if } g \in {}_yt_x \ G_x \\
0 \text{ \qquad otherwise}
\end{array}
\right.
\eqnst
and then define the partial action by 
\beqnast
 p_g \cdot ({}_x f_x) & = & \lambda_g^x {}_x f_x ,\\
 p_g \cdot (\yfx) & = & {}^y\pi_g^x \yfx .\\ 
\eqnast
Let us show that this is indeed a partial $k^G$-action. By construction, we already have a partial $k^G$-action on each algebra $\xCx$. 

Consider a pair $(x,y) \in \mcst$. The equations that must be checked for the operators $\pi_g = {}^y\pi_g^x$, for every $g\in G$, $x$, $y\in \mC_0$ are the following: 
\beqna
\sum \pi_g & = & Id_{\yCx} \label{exampleunidim1}\\
\pi_g & =& \sum_{l} \lambda^y_{gl^{-1}} \pi_l = \sum_{l} \lambda^x_{l^{-1}g} \pi_l 
\label{exampleunidim2}\\
\pi_g \pi_h & = & \lambda^y_{gh^{-1}} \pi_h = \lambda^x_{h^{-1}g} \pi_h .
\label{exampleunidim3}
\eqna

Equations (\ref{exampleunidim1}) and (\ref{exampleunidim3}) are straightforward, and  equations (\ref{exampleunidim2})  hold because they are a consequence of 
(\ref{exampleunidim1}) and (\ref{exampleunidim3}). For instance,
\beqnst
\sum_{l} \lambda^y_{gl^{-1}} \pi_l = 
\sum_{l} \pi_g \pi_l = \pi_g (\sum \pi_l) = \pi_g.  
\eqnst
 
Conditions involving three or more distinct objects are automatically satisfied because of the hypotheses on the family $\{{}_yt_x\}$. 

\end{proof}

We remark that this result describes all structures of partial $k^G$-actions on a Schurian category (where every space of morphisms ${}_y\mC_x$ is either zero or unidimensional).

Taking a closer look at the possible idempotents $\Lambda^x$ one sees that, when $|G| \neq 0$ in $k$, these correspond to the \emph{transitive permutation representations} of $G$.

In order to explain this correspondence, consider a subgroup $H$ of $G$, of index $m$, and let $\Omega(H) =\{g_1H,g_2H,g_3H, \ldots, g_mH\}$ be the set of left cosets of $H$; take $g_1 = e$. There is a canonical left action of  $G$ on $\Omega(H) =\{H,g_2H,g_3H, \ldots, g_mH\}$ by left multiplication: $g \rhd g_i H = gg_i H$ (where $``g \rhd x''$ indicates the action of $g$ on $x$).  This action is transitive, i.e., the orbit of any element of $\Omega(H)$ is the whole set. It is easy to see that the stabilizer of the \emph{point} $H$, i.e., the group $G_H = \{g \in G; g \rhd H = H\}$ is the \emph{group} $H$ itself.

A transitive permutation representation of $G$ is a left action of $G$ on a non-empty set $\Omega$ which is transitive. Given such an action, consider an element $x_0 \in \Omega$ and its stabilizer $H =G_{x_0} = \{g \in G; g \rhd x_0 = x_0\}$. If the left cosets of $H$ are $H,g_2H,g_3H, \ldots, g_mH$,  it is easy to see that the elements of $\Omega$, which correspond to the orbit of $x_0$, are listed as $x_0,g_2x_0, \ldots, g_mx_0$ (and that their respective stabilizers are $H,g_2Hg_2^{-1}, \ldots, g_mHg_m^{-1}$). The map $\varphi : \Omega(H) \rightarrow \Omega$ defined by $g_iH \mapsto g_ix$ is an equivalence of left actions: it is bijective and $\varphi(g \rhd x) = g \rhd \varphi(x) $ for every $x \in \Omega(H)$ and every $g \in G$. Therefore, the inequivalent transitive permutation representations of $G$ are listed by the permutation representations $
\Omega(H)$ associated to subgroups $H$  of $G$.

Every permutation representation has a canonical linearization: in the case of $\Omega(H)$, we may consider the $k$-vector space generated by the left cosets $H,g_2H,g_3H, \ldots, g_mH$, and the left action of $G$ on $\Omega(H)$ gives rise to a linear representation $V_{\Omega(H)}$ of $G$ where each $g \in G$ permutes the elements of the basis $\beta = \{H,g_2H,g_3H, \ldots, g_mH\}$. 

Finally, since $|G| \neq 0$ in $k$, we may consider the idempotent $e_H = \frac{1}{|H|} \sum_{h \in H} h$ of $kG$; the left ideal $V_H$ generated by $e_H$ is the subspace of $kG$ generated by the elements $v_{H}, v_{g_1H}, \ldots, v_{g_mH}$, where $v_{g_iH} = \sum_{h \in H} g_ih$. These elements are linearly independent and hence form a basis for $V_H$. The \emph{linear representation} associated to $e_H$ is given by left multiplication on $V_H$, and the canonical bijection between the left cosets of $H$ and the vectors $v_{g_iH}$ determines an equivalence of linear representations between $V_{\Omega(H)}$ and $V_H$. Moreover, $G$ acts on $V_H$ by permuting the elements $v_{g_iH}$, and the restriction of the action of $G$ to the basis $\Omega = \{v_{g_iH}; 1 \leq i \leq m\}$ is a transitive permutation representation which is equivalent to the permutation representation $\Omega(H)$. Therefore, when $|G| \neq 0$ in $k$ the partial actions of $k^G$ on $k$ are in correspondence with the transitive permutation representations of $G$.

\medskip

Next we will give two examples of a different type.

\begin{exmp} (Partial actions of $kG$ on $k$)

The same reasoning we have used for describing partial $k^G$-actions induced by  $k$ applies for partial actions of the group algebra $kG$ induced by  $k$. Let $\{\delta_g\}_{g\in G}$ be a $k$-basis of $kG$.
Writing $\lambda_g$ for $\Lambda(\delta_g)$, the defining equations for a partial action are 
\begin{enumerate}
\item[(1)] $\lambda_e = 1$,
\item[(2)] $\lambda_g = \lambda_g^2$,
\item[(3)] $\lambda_g \lambda_h = \lambda_g \lambda_{gh}$.
\end{enumerate}
Similar calculations to those for $k^G$-partial actions yield that the support of $\Lambda$ -that is, the set  $supp(\Lambda) = \{g\in G \hbox{ such that } \lambda_g\neq 0\}$- is a subgroup  of $G$ on which $\Lambda$ is constant. By (2) we have $\lambda_h = 1$ for all $h \in supp(\Lambda)$. Again, we have a $1$-$1$ correspondence between partial $kG$-actions induced by $k$ and subgroups of $G$. 

\end{exmp}

\begin{exmp}{Partial actions of $H_4$ on $k$}
Let $H_4$ be the Sweedler 
$4$-dimensional Hopf $k$-algebra 
\[
H_4 = \langle 1,g,x,xg; g^2 = 1, x^2 =0, gx = -xg \rangle
\] 
where $char(k) \neq 2$. Consider the basis of $H_4$ formed by $e_1 = (1+g)/2$, $e_2 = (1-g)/2$, $h_1 = xe_1$, 
$h_2 = xe_2$. Then $\{e_1,e_2\}$ is a complete set of orthogonal idempotents for $H_4$ and $\{h_1,h_2\}$ is a basis of the radical of $H_4$, where $h_1^2 = h_2^2 = 0$. For the other products of pairs of elements of the basis, we have $e_1 h_2 = h_2 e_2 = h_2$, $ e_2 h_1 = h_1e_1 = h_1$, and all the remaining products of pairs of basis elements are zero.

The expressions for the coproducts in this basis are 
\begin{eqnarray}
\Delta (e_1) &=& e_1 \otimes e_1 + e_2 \otimes e_2 , \nonumber\\
\Delta (e_2) &=& e_1 \otimes e_2 + e_2 \otimes e_1 , \nonumber\\
\Delta (h_1) &=& e_1 \otimes h_1 - e_2 \otimes h_2 + h_1 \otimes e_1 
+h_2 \otimes e_2 , \nonumber\\
\Delta (h_2) &=& e_1 \otimes h_2 - e_2 \otimes h_1 + h_1 \otimes e_2 
+h_2 \otimes e_1 .\nonumber
\end{eqnarray}
The counit takes the values
$\epsilon(e_1 ) =1$ and $\epsilon (e_2 )=\epsilon (h_1 )=
\epsilon (h_2 )=0$. Finally, the antipode  on these elements is given by
\[
S(e_1 )=e_1, \quad S(e_2 )=e_2, \quad S(h_1 )=-h_2,   \quad  
S(h_2)=h_1 .
\]

Now, consider the $k$-category $\mC$ with one object $\{*\}$ and a $1$-dimensional 
$k$-vector space of morphisms. In fact, we are just looking at the algebra $k$ as a $k$-category. We want to define a partial action of $H_4$ on $\mC$.
For this, we remark that the discussion made in Subsection \ref{lambdas} about the definition of a partial action of a Hopf algebra on a $k$-category applies also for $H_4$.

Since $1_{H_4} = e_1 + e_2,$ equation \eqref{partialH1:condicion1} reads as
\beqn \label{H4parcial(a)}
\lambda_{e_1} + \lambda_{e_2} = 1.
\eqn 
Using equation \eqref{partialH2:condicion2} for $e_1$ and $e_2$, we get 
\begin{eqnarray}
\lambda_{e_1} = \lambda_{e_1}^2 + \lambda_{e_2}^2, \label{H4parcial(b)e1}\\
\lambda_{e_2} = 2 \lambda_{e_1} \lambda_{e_2}. \label{H4parcial(b)e2}
\end{eqnarray}
Therefore,  if  $\lambda_{e_2} = 0$ then $\lambda_{e_1}=1$ by (\ref{H4parcial(a)}).
Writing equation \eqref{partialH2:condicion2} for $h_1$ and $h_2$ we get 
\begin{eqnarray}
\lambda_{h_1} = 2 \lambda_{e_1} \lambda_{h_1}, \\
\lambda_{h_2} = 2 \lambda_{e_1} \lambda_{h_2}.
\end{eqnarray} 
Hence, if $\lambda_{e_1}=1$ then $\lambda_{h_1}=\lambda_{h_2}=0$, and the action is the trivial (global) action by $\varepsilon$, i.e., $\lambda_h = \varepsilon(h)$ for all $h \in H_4$.  
If
$\lambda_{e_2} \neq 0$ then $\lambda_{e_1} = \lambda_{e_2} = 1/2$ by (\ref{H4parcial(a)}) and (\ref{H4parcial(b)e2}).
Equation \eqref{partialH3:condicion3} provides 
$\lambda_{h_1}\lambda_{e_1} = 2 \lambda_{h_1} \lambda_{e_1}$
which shows that  if 
$\lambda_{e_1}= \lambda_{e_2}=1/2 $ then $\lambda_{h_1}=0$. The remaining equations put no constraints on $h_2$, and for any $\alpha \in k$ the sequence
\[
\lambda_{e_1} = \lambda_{e_2} = 1/2, \ \  \lambda_{h_1}=0,\ \  \lambda_{h_2}= \alpha 
\] 
defines a partial action of $H_4$ on $k$. This is the partial action obtained in \cite{AB2} from the dualization of a partial $H_4$-coaction on $k$ presented in \cite{CJ}. Therefore,  there are essentially two structures of partial $H_4$-module category on $\mC$ and thus on the algebra $k$.
We do not know whether different values of $\alpha$ provide isomorphic partial actions or not.

\end{exmp}

\subsection{Tensoring partial $H$-module categories} 

In this subsection we prove that given a cocommutative $k$-Hopf algebra $H$ and two partial 
$H$-module categories, their tensor product is canonically a partial $H$-module category.

So, let $H$ be a \emph{cocommutative} Hopf algebra and let $\mA$ and $\mB$ be two partial $H$-module categories. We recall that the $k$-linear category $\mA \ot \mB$ is the category whose objects are $\mA_0 \times \mB_0$, whose morphism spaces are given by 
\[ 
 \ _{(y,y')}(\mA \ot \mB)_{(x,x')} = \ _{y}\mA_x \ot \ _{y'}\mB_{x'}
\]
and composition is defined by 
\beqnast
\ _{(z,z')}(\mA \ot \mB)_{(y,y')}  \ot \ _{(y,y')}(\mA \ot \mB)_{(x,x')} 
& \rightarrow &  \ _{(z,z')}(\mA \ot \mB)_{(x,x')}\\
(\ _za_y \ot \ _{z'} b_{y'}) \ot (\ _ya_x \ot \ _{y'} b_{x'}) 
& \mapsto 
& \ _za_y \ _ya_x\ot \ _{z'} b_{y'} \ _{y'} b_{x'}  
\eqnast
   For every $(x,x') \in \mA_0 \times \mB_0$, $h \in H$ and $\ _y a_x \ot \ _{y'}b_{x'} \in \ _{(y,y')}(\mA \ot \mB)_{(x,x')}$, let 
\[
h \cdot (\ _y a_x \ot \ _{y'}b_{x'})  = 
\sum (h_{(1)} \cdot \ _y a_x) \ot (h_{(2)} \cdot \ _{y'}b_{x'}).  
\]

\begin{thm}\label{tensor_partial_H_categories}
The $k$-linear maps 
\beqnast
H \ot \ _{(y,y')}(\mA \ot \mB)_{(x,x')} & \rightarrow & \ _{(y,y')}(\mA \ot \mB)_{(x,x')} \\
h \ot\ _y a_x \ot \ _{y'}b_{x'} & \mapsto &  h \cdot (\ _y a_x \ot \ _{y'}b_{x'}) 
 \eqnast
define a structure of partial $H$-module category on $\mA \ot \mB$. 
\end{thm}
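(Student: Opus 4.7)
The plan is to verify the three defining axioms (\ref{partialH1:condicion1}), (\ref{partialH2:condicion2}) and (\ref{partialH3:condicion3}) for the map
\[
h \cdot (\ _y a_x \ot \ _{y'}b_{x'})  = \sum (h_{(1)} \cdot \ _y a_x) \ot (h_{(2)} \cdot \ _{y'}b_{x'})
\]
by reducing each one to a tensor of the corresponding axiom for $\mA$ and for $\mB$, together with Sweedler manipulations in $H$ (coassociativity and cocommutativity). The key observation is that in $\mA\ot\mB$ the identity at $(y,y')$ is $\ _{y}1_y\ot \ _{y'}1_{y'}$ and composition is componentwise.

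For axiom (\ref{partialH1:condicion1}) I would simply use $\Delta(1_H)=1_H\ot 1_H$ and apply (\ref{partialH1:condicion1}) in $\mA$ and $\mB$. Axiom (\ref{partialH2:condicion2}) unfolds as follows: expand $h\cdot\bigl((\ _zc_y\circ\ _ya_x)\ot(\ _{z'}d_{y'}\circ\ _{y'}b_{x'})\bigr)$ with the defining formula, apply (\ref{partialH2:condicion2}) in each factor, and obtain
\[
\sum (h_{(1)(1)}\cdot\ _zc_y)\circ(h_{(1)(2)}\cdot\ _ya_x)\ot (h_{(2)(1)}\cdot\ _{z'}d_{y'})\circ(h_{(2)(2)}\cdot\ _{y'}b_{x'}).
\]
The right-hand side of (\ref{partialH2:condicion2}) applied to the composite in $\mA\ot\mB$ unfolds to the same expression except that the middle two Sweedler components appear swapped; cocommutativity of $H$ (applied after coassociativity) identifies the two. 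This is where the hypothesis on $H$ enters essentially.

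Axiom (\ref{partialH3:condicion3}) is the technical heart. I would compute
\[
h\cdot\bigl(k\cdot(\ _ya_x\ot\ _{y'}b_{x'})\bigr)=\sum h_{(1)}\cdot(k_{(1)}\cdot\ _ya_x)\ot h_{(2)}\cdot(k_{(2)}\cdot\ _{y'}b_{x'})
\]
and apply (\ref{partialH3:condicion3}) inside each tensor factor to land on a sum indexed by $\Delta^{(2)}(h)\otimes\Delta(k)$. On the other side, expand
\[
\sum(h_{(1)}\cdot\ _{(y,y')}1_{(y,y')})\circ((h_{(2)}k)\cdot(\ _ya_x\ot\ _{y'}b_{x'}))
\]
using $\ _{(y,y')}1_{(y,y')}=\ _y1_y\ot \ _{y'}1_{y'}$, the multiplicativity of $\Delta$ on $h_{(2)}k$, and componentwise composition. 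Both expressions reduce to the same tensor of compositions in $\mA$ and $\mB$ once the middle two Sweedler factors of $\Delta^{(3)}(h)$ are swapped via cocommutativity. The second form of (\ref{partialH3:condicion3}) (with $\ _x1_x$ on the right) is checked symmetrically.

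The main obstacle, as sketched above, is purely bookkeeping: one has to keep track of which Sweedler components of $h$ (and of $k$) land in each tensor slot and invoke cocommutativity at exactly the right step to match the two sides. Once this pattern is identified in axiom (\ref{partialH2:condicion2}), axiom (\ref{partialH3:condicion3}) follows by the same mechanism, and axiom (\ref{partialH1:condicion1}) is immediate.
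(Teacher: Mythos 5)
Your proposal is correct and follows essentially the same route as the paper's proof: expand both sides of each axiom componentwise, apply the corresponding axiom in $\mA$ and in $\mB$, and observe that the two resulting expressions differ only by a transposition of the middle Sweedler components of $h$, which cocommutativity resolves. The paper carries out exactly this bookkeeping for axioms \eqref{partialH2:condicion2} and \eqref{partialH3:condicion3}, with the second form of \eqref{partialH3:condicion3} handled symmetrically as you indicate.
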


\begin{proof}
Obviously $1_H \cdot \ _y a_x \ot \ _{y'}b_{x'}  =  \ _y a_x \ot \ _{y'}b_{x'}  $ 
for every $\ _y a_x \ot \ _{y'}b_{x'} \in \ _{(y,y')}(\mA \ot \mB)_{(x,x')} $.
 For the other two properties,   
\[ h \cdot ( \ _z a_y  \ _y^{} \atil_x  \ot  \ _{z'}b_{y'} \ _{y'}\btil_{x'}) =
\sum h_{(1)} \cdot  (\ _z a_y  \ _y^{} \atil_x ) \ot  h_{(2)} \cdot ( \ _{z'}b_{y'}  \ _{y'}\btil_{x'} ) 
=\]
\[\sum (h_{(1)} \cdot  \ _z a_y )(h_{(2)} \cdot  \ _y^{} \atil_x ) \ot 
 (h_{(3)} \cdot  \ _{z'}b_{y'})(h_{(4)} \cdot  \ _{y'}\btil_{x'} )
\]
and 
\beqnast
&&\sum (h_{(1)} \cdot ( \ _z a_y  \ot  \ _{z'}b_{y'} ))(h_{(2)} \cdot ( \ _y^{} \atil_x  \ot  
\ _{y'}\btil_{x'} )) = \\
& & \sum ((h_{(1)} \cdot  \ _z a_y )\ot(h_{(2)} \cdot  \ _{z'}b_{y'})) ((h_{(3)} \cdot  \ _y^{} \atil_x )\ot(h_{(4)} \cdot  \ _{y'}\btil_{x'} )) =\\
& &  \sum (h_{(1)} \cdot  \ _z a_y )(h_{(3)} \cdot  \ _y^{} \atil_x ) \ot  (h_{(2)} \cdot \ _{z'}b_{y'} )(h_{(4)} \cdot  \ _{y'}\btil_{x'} ), 
\eqnast
and it follows that 
\[
h \cdot ( \ _z a_y  \ _y^{} \atil_x  \ot  \ _{z'}b_{y'} \ _{y'}\btil_{x'}) = \sum (h_{(1)} \cdot ( \ _z a_y  \ot  \ _{z'}b_{y'}))(h_{(1)} \cdot ( \ _y^{} \atil_x  \ot 
 \ _{y'}\btil_{x'} ))
\] when $H$ is cocommutative. Likewise, 
\beqnast
h \cdot (k \cdot ( \ _y a_x  \ot  \ _y b_x )) 
& = &\sum h \cdot ((k_{(1)} \cdot  \ _y a_x  )\ot (k_{(2)} \cdot  \ _y b_x )) \\
&= & \sum  (h_{(1)} \cdot(k_{(1)} \cdot  \ _y a_x  ))\ot (h_{(2)} \cdot(k_{(2)} \cdot  \ _y b_x ))\\
& = & 
\sum  (h_{(1)} \cdot  \ _y 1^A_y )(h_{(2)}k_{(1)} \cdot  \ _y a_x  )\ot (h_{(3)} \cdot  \ _y 1^B_y)(h_{(4)}k_{(2)} \cdot  \ _y b_x )
\eqnast
and 
\beqnast
&& 
\sum (h_{(1)} \cdot ( \ _y 1^A_y  \ot  \ _y 1^B_y))(h_{(2)}k \cdot ( \ _y a_x  \ot  \ _y b_x ))
= \\
& &
\sum ((h_{(1)} \cdot  \ _y 1^A_y ) \ot (h_{(2)} \cdot  \ _y 1^B_y) )
(h_{(3)}k_{(1)} \cdot  \ _y a_x ) \ot (h_{(4)}k_{(2)} \cdot  \ _y b_x ))= \\
& & \sum  (h_{(1)} \cdot  \ _y 1^A_y)(h_{(3)}k_{(1)} \cdot  \ _y a_x  )
\ot (h_{(2)} \cdot  \ _y 1^B_y)(h_{(4)}k_{(2)} \cdot  \ _y b_x ),
\eqnast
thus showing that 
\[
h \cdot (k \cdot ( \ _y a_x  \ot  \ _y b_x )) = 
\sum (h_{(1)} \cdot ( \ _y 1^A_y \ot  \ _y 1^B_y))(h_{(2)}k \cdot ( \ _y a_x  \ot  \ _y b_x )) \]
when $H$ is cocommutative. In the same manner we have 	
\[
h \cdot (k \cdot ( \ _y a_x  \ot  \ _y b_x )) = 
\sum (h_{(1)}k \cdot ( \ _y a_x  \ot  \ _y b_x ))(h_{(2)} \cdot ( \ _x 1^A_x \ot  \ _x 1^B_x)).
 \]
\end{proof}

\begin{cor} \label{tensor_partial_H_algebras}
Let $H$ be a cocommutative Hopf algebra, and let $A,B$ be two partial $H$-module algebras. The $k$-algebra $A \ot B$ is a partial $H$-module algebra with partial action 
\[h \cdot (a \ot b) = \sum h_{(1)} \cdot a \ot  h_{(2)} \cdot b .\] 
\end{cor}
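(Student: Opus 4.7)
The plan is to deduce this corollary directly from Theorem \ref{tensor_partial_H_categories} by viewing algebras as one-object categories. By the earlier observation in Example \ref{example_X_finite} (and the discussion immediately after Definition \ref{partialHmodulecat}, together with the corollary identifying partial actions on a unitary $X$-category with partial actions on the corresponding algebra), a partial $H$-module algebra $A$ is the same datum as a partial $H$-module category $\mA$ with $\mA_0 = \{\bullet\}$ and $\ _\bullet \mA_\bullet = A$, and similarly for $B$.

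First, I would form the $k$-linear category $\mA \ot \mB$ as in the previous theorem. Since $\mA_0 \times \mB_0 = \{(\bullet,\bullet)\}$ has a single element, $\mA \ot \mB$ is again a one-object category, and its unique morphism space is $A \ot B$ with the usual tensor product algebra structure (the composition described right before Theorem \ref{tensor_partial_H_categories} specializes precisely to $(a \ot b)(a' \ot b') = aa' \ot bb'$).

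Next, apply Theorem \ref{tensor_partial_H_categories}: since $H$ is cocommutative and $\mA,\mB$ are partial $H$-module categories, the formula
\[
h \cdot (a \ot b) = \sum (h_{(1)} \cdot a) \ot (h_{(2)} \cdot b)
\]
defines a partial $H$-module category structure on $\mA \ot \mB$. Translating back through the identification of one-object partial $H$-module categories with partial $H$-module algebras, this is exactly a partial $H$-module algebra structure on $A \ot B$ given by the stated formula.

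There is essentially no obstacle here, as all the hard work (the cocommutativity-based manipulations needed to verify axioms \eqref{partialH1:condicion1}--\eqref{partialH3:condicion3}) was already done in the proof of Theorem \ref{tensor_partial_H_categories}; the corollary is just the one-object specialization. If one preferred a direct proof without invoking the categorical statement, one would redo the same three calculations (checking unitality, multiplicativity on a product, and the partial associativity axiom), each of which reduces at the last step to moving $h_{(i)}$ past $h_{(j)}$ inside the comultiplication, which is permitted precisely because $H$ is cocommutative.
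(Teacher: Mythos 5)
Your proposal is correct and matches the paper's intent exactly: the corollary is stated without proof precisely because it is the one-object specialization of Theorem \ref{tensor_partial_H_categories}, using the identification of partial $H$-module algebras with partial $H$-module categories over a unitary set. Nothing is missing.
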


\begin{rem} Partial $H$-module categories form a monoidal category with the trivial $H$-module category $k$ -considered as a category with one object- as its unit.
\end{rem}

\subsection{Induced partial actions}

A key example of a partial $H$-module category is obtained by restricting an action on a $k$-category to an ideal, as we explain
after the next definition. 
\begin{defn}
A {\bf central  idempotent} in a $k$-category $\mC$ is an idempotent natural transformation $e$ of the identity functor $Id_{\mC}$ to itself.
In other words, it is a collection $e = \{_{x}e_{x}\} \in \xCx$, for $x\in \mC_{0}$ of idempotents of the endomorphism algebras $\xCx$ 
verifying that for every $\yfx \in \yCx$, 
\begin{equation}
\label{central}
\yey \circ \yfx = \yfx \circ \xex.
\end{equation}
\end{defn}

Given an idempotent, we define an ideal ${\cal I}$ in $\mC$ as follows: for each $x, y \in \mC_{0}$,
\[ _{y}{\cal I}_{x} = \yey \yCx \xex = \yey \yCx = \yCx \xex. \]

As usual, if $e$ is a non trivial central idempotent, then $f = (\xidx - \xex)_{x \in \mC_{0}}$ is another non trivial central idempotent, $e$ and $f$ are orthogonal -that is, both compositions are zero-, and if $\cal I$ and $\cal J$ are respectively the ideals defined by 
$e$ and $f$ , then for all $x$ and $y\in \mC_{0}$, $\yCx$ is canonically isomorphic to $_{y}\cal{I}_{x} \oplus {}_{y}\cal{J}_{x}$. 
Every $k$-category has at least two central idempotents, corresponding respectively to the zero and the identical natural transformations.
The existence of a non trivial central idempotent in $\mC$ 
corresponds to a decomposition of each $k$-vector space of morphisms
$\yCx= \yAx \oplus \yBx$ such that 
\[
({}_za_y + {}_zb_y)\circ ({}_ya'_x + {}_yb'_x) =
{}_za_y \circ {}_ya'_x + {}_zb_y \circ {}_yb'_x 
\]
for every $x,y,z \in \mC_0$ and every $_za_y \in \zAy, {}_zb_y \in \zBy, 
{}_ya'_x  \in \yAx, {}_yb'_x \in \yBx$ and 
at least one $\yAx$ and one ${}_{y'}{\mB}_{x'}$ are not zero.

We will show in the sequel that if $\mC$ is an $H$-module category provided of a non trivial central idempotent, then the associated
ideal ${\cal I}$ has a canonical structure of partial $H$-module category. 

\begin{exmp}\label{hmodulocategoria}

From \cite{CS} we know that an $H$-module category is a $k$-category $\mC$, such that each space of morphisms is an $H$-module, 
each endomorphism algebra is an $H$-module algebra 
and the composition maps are morphisms of $H$-modules -- where
the tensor product of $H$-modules is considered as an $H$-module via the comultiplication of $H$.

Assume that $e = \{_{x}e_{x}\}_{x \in \mC_{0}}$ is a non trivial central idempotent, and let ${\cal I}$ be the ideal associated to $e$. 
Given $\yfx \in \ _{y}{\cal I}_{x}$ and $h \in H$, define  
\begin{equation}\label{induced}
h \cdot \yfx = \yey \circ (h \rhd \yfx) = (h \rhd \yfx) \circ \xex.
\end{equation}

The above formula endows ${\cal I}$ with a partial $H$-module category structure. We shall just verify conditions
~\eqref{partialH2:condicion2} and ~\eqref{partialH3:condicion3}, since ~\eqref{partialH1:condicion1} is straightforward.

For ~\eqref{partialH2:condicion2}, given $h \in H$, $\ygx \in \yIx$ and $\zgy \in \zIy$, we have 
\begin{eqnarray*}
h \cdot (\zfy  \circ  \ygx ) 
& = & \zez \circ (h \rhd  (\zfy \circ  \ygx )) \\
& = & \sum \zez \circ  (h_{(1)} \rhd \zfy)\circ  (h_{(2)}  \rhd\ygx ) \\
&\overset{\text{(\ref{central})}}{=} & \sum \zez \circ  (h_{(1)} \rhd \zfy) \circ  \yey \circ  (h_{(2)}  \rhd \ygx ) \\
& = & \sum (h_{(1)} \cdot \zfy) \circ  (h_{(2)} \cdot \ygx).
\end{eqnarray*}
For the first equality of ~\eqref{partialH3:condicion3}, given $h,k \in H$ and $\yfx \in \yIx$,  
\begin{eqnarray*}
h \cdot (k \cdot  \yfx ) 
& = & \yey \circ  (h \rhd (\yey \circ (k \rhd \yfx ))) \\
& = & \sum \yey \circ  (h_{(1)} \rhd \yey)\circ  (h_{(2)}k \rhd \yfx ) \\
&\overset{\text{(\ref{central})}}{=}  & \sum \yey \circ (h_{(1)} \rhd \yey) \circ \yey \circ (h_{(2)}k \rhd \yfx ) \\
& = & \sum (h_{(1)} \cdot  \yey )\circ (h_{(2)} k \cdot \yfx)
\end{eqnarray*}
The other equality follows analogously, using this time that $h \cdot \yfx = (h \rhd \yfx) \circ \xex$. 
Hence ${\cal I}$ is, as we claimed, a partial $H$-module category. 
\end{exmp}

\subsection{Partial group actions and partial $kG$-module categories}\label{ejemploideal}

Given a group $G$, we want now to establish a relation between the concepts of partial $kG$-module category and of being a $k$-linear category
with a partial $G$-action, as defined in \cite{CFM}.
So we start by recalling the latter.

Let $\mC$ be a $k$-linear category. The group $G$ acts partially on $\mC$ if $G$ acts on the set $\mC_{0}$ and for every $g \in G$, 
there exists an ideal ${\cal I}^{g} \unlhd \mC$ 
and a isomorphism of $k$-semicategories 
\[\alpha_{g}:{\cal I}^{g^{-1}} \rightarrow {\cal I}^{g}\]  
satisfying the following conditions:
\begin{enumerate}
\item[(a)] ${\cal I}^{e} = \mC$ and $\alpha_{e} = Id_{\mC}$.
\item[(b)] Given $x,y \in \mC_0$ and $g,h \in G$,
\[\alpha_{h^{-1}}(_{hy}{\cal I}_{hx}^{h} \cap _{hy}{\cal I}_{hx}^{g^{-1}}) \subseteq _{y}{\cal I}_{x}^{(gh)^{-1}}.\]
\item[(c)] For every $_{y}f_{x} \in \alpha_{h^{-1}}(_{hy}{\cal I}_{hx}^{h} \cap _{hy}{\cal I}_{hx}^{g^{-1}})$, we have $\alpha_{g} \circ \alpha_{h} (_{y}f_{x}) = \alpha_{gh}(_{y}f_{x})$.
\end{enumerate}

We shall see next that every partial $kG$-module category $\mC$ has a canonical partial $G$-action. 

\begin{prop} Let $G$ be a group and $\mC$ a $k$-category. There is a one to one correspondence between partial $kG$-module category structures on $\mC$ and partial $G$-actions on $\mC$ fixing objects such that the ideals are generated by idempotents. 
\end{prop}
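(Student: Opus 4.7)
The plan is to build the correspondence explicitly, extracting from a partial $kG$-action on $\mC$ the central idempotents that will generate the ideals $\mI^g$ and the partial isomorphisms $\alpha_g$, and conversely. Throughout, the key features to exploit are that $\delta_g \in kG$ is grouplike and that $\delta_g \delta_{g^{-1}} = \delta_e = 1_{kG}$.

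For the forward direction, starting from a partial $kG$-action on $\mC$, I would put $\xegx := \delta_g \cdot \xidx$ for every $x \in \mC_0$ and $g \in G$. Using $\Delta(\delta_g) = \delta_g \ot \delta_g$, axiom \eqref{partialH2:condicion2} yields $\xegx \circ \xegx = \xegx$, and plugging $h = \delta_g$, $k = \delta_{g^{-1}}$ into \eqref{partialH3:condicion3} gives the crucial identity
\[
\delta_g \cdot (\delta_{g^{-1}} \cdot \yfx) = \yegy \circ \yfx = \yfx \circ \xegx,
\]
valid for every $\yfx \in \yCx$. Thus $\{\xegx\}_{x \in \mC_0}$ is a central idempotent in the categorical sense, and I would take $\mI^g$ to be the associated ideal and define $\alpha_g : \mI^{g^{-1}} \to \mI^g$ by $\alpha_g(\yfx) := \delta_g \cdot \yfx$. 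The image lies in $\mI^g$ because $\delta_g \cdot \yfx = \yegy \circ (\delta_g \cdot \yfx)$; composition is preserved by \eqref{partialH2:condicion2} combined with $\delta_g$ being grouplike; and the displayed identity makes $\alpha_{g^{-1}}$ a two-sided inverse of $\alpha_g$. Axioms (a) and (c) of a partial $G$-action then unfold from $\delta_e$ acting as the identity and from $\delta_g \cdot (\delta_h \cdot \yfx) = \yegy \circ (\delta_{gh} \cdot \yfx)$, while (b) follows from centrality of the $\yegy$'s and commutativity of the family $\{\yegy\}_g$ inside $\yCy$.

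For the backward direction, given a partial $G$-action on $\mC$ fixing objects, with ideals $\mI^g$ generated by central idempotents $e^g = \{\xegx\}_{x \in \mC_0}$, I would define
\[
\delta_g \cdot \yfx := \alpha_g\bigl({}_y e_y^{g^{-1}} \circ \yfx\bigr) = \alpha_g\bigl(\yfx \circ {}_x e_x^{g^{-1}}\bigr)
\]
and extend linearly to all of $kG$. Axiom \eqref{partialH1:condicion1} is immediate from (a). For \eqref{partialH2:condicion2} I would use centrality of $e^{g^{-1}}$ to factor
\[
{}_z e_z^{g^{-1}} \circ (\zfy \circ \ygx) = \bigl({}_z e_z^{g^{-1}} \circ \zfy\bigr) \circ \bigl({}_y e_y^{g^{-1}} \circ \ygx\bigr)
\]
as a composition of morphisms in $\mI^{g^{-1}}$, then apply that $\alpha_g$ is a semifunctor. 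For \eqref{partialH3:condicion3} I would unfold $\delta_g \cdot (\delta_h \cdot \yfx)$ via the definition, observe that $\alpha_h({}_y e_y^{h^{-1}} \circ \yfx)$ lies in $\mI^h$ so that its composition with ${}_y e_y^{g^{-1}}$ lies in $\mI^h \cap \mI^{g^{-1}}$, and invoke axiom (c) in the form $\alpha_g \circ \alpha_h = \alpha_{gh}$ on $\alpha_{h^{-1}}(\mI^h \cap \mI^{g^{-1}})$, combined with pairwise commutativity of the idempotents $\{\xegx\}_g$ inside $\xCx$.

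Bijectivity then follows from two observations. Starting with a partial $kG$-action, the reconstructed action sends $\yfx$ to $\delta_g \cdot ({}_y e_y^{g^{-1}} \circ \yfx) = \yegy \circ (\delta_g \cdot \yfx) = \delta_g \cdot \yfx$, the last equality because $\delta_g \cdot \yfx \in \mI^g$. Conversely, any isomorphism of semicategories $\alpha_g : \mI^{g^{-1}} \to \mI^g$ must send the unit ${}_y e_y^{g^{-1}}$ of ${}_y\mI^{g^{-1}}_y$ to the unit $\yegy$ of ${}_y\mI^g_y$, so the reconstructed central idempotents and semifunctors coincide with the originals. The main obstacle will be axiom (b) of the partial $G$-action (and its counterpart when translating back), since its verification requires simultaneously managing the intersection $\mI^h \cap \mI^{g^{-1}}$, the centrality of the various $\xegx$, and their pairwise commutativity inside each $\xCx$, alongside the partial Hopf axiom \eqref{partialH3:condicion3} for a product of two distinct group-like elements.
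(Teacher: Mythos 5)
Your proposal follows essentially the same route as the paper: both define the idempotents $\xegx = g\cdot\xidx$, derive their centrality from axiom \eqref{partialH3:condicion3} applied to the grouplike elements $g, g^{-1}$, take $\mI^g$ to be the generated ideal with $\alpha_g = g\cdot(-)$, and invert the construction via $g\cdot\yfx = \alpha_g(\yfx\circ {}_x^{}e_x^{g^{-1}})$. Your sketch is in fact slightly more complete than the paper's, since you also indicate why the two constructions are mutually inverse, a point the paper leaves implicit.
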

\begin{proof}
Suppose that $\mC$ is a partial $kG$ module category. Given $g\in G$, for every $x \in \mC_{0}$,  we define an idempotent $\xegx \in \xCx$ by
\[
\xegx = \ug \cdot \xidx.
\]

The fact that $\xegx$ is indeed an idempotent follows from equation ~\eqref{partialH2:condicion2}.

The family $e^g = \{\xegx\}_{x \in \mC_0}$ is central since, for each $\back \yfx \in \back \yAx$
\beqnast
\yegy \circ \yfx 
& = & (g \cdot \yidy) \circ \yfx  =  (g \cdot \yidy) \circ (gg^{-1} \circ \yfx) 
 =  (g \cdot (\yidy \circ (g^{-1} \circ \yfx)) \\
 & = & (g \cdot ((g^{-1} \circ \yfx)\circ \xidx) =  \yfx \circ (g \cdot \xidx) \\
& = & \yfx \circ \xegx.
\eqnast
  
Thus we may take $\ci^g$ as the ideal generated by $e^g$, i.e., $(\ci^g)_{0}=\mC_{0}$ and 
\[
{}_{y}{\cal I}_{x}^{g} = \yegy \yCx \xegx.
\] 
The isomorphism  
$\alpha_{g}: {\cal I}^{g^{-1}} \rightarrow {\cal I}^{g}$
is defined by $\alpha_g (x)=x$, for every $x\in (\ci^g)_{0}$ and 
$\alpha_g(\yfx) = \ug \cdot \yfx$ for $\back \yfx \in \ _{y}{\cal I}_{x}^{g^{-1}}$; the fact that this is indeed an isomorphism follows from the definitions of $\back \xegx$ and of partial $kG$-module category. 

Condition (a) is obviously satisfied. Since $G$ acts trivially on the objects, (b) may be rewritten as 
\[\alpha_{h^{-1}}(_{y}{\cal I}_{x}^{h} \cap \ _{y}{\cal I}_{x}^{g^{-1}}) \subseteq \ _{y}{\cal I}_{x}^{(gh)^{-1}}.\]
Given $x,y \in \mC_{0}$, $g,h \in G$ and $\back \yfx \in \ _{y}{\cal I}_{x}^{h} \cap \ _{y}{\cal I}_{x}^{g^{-1}}$, 
we have that $\yfx = \yfx \ _x^{}e_x^{g^{-1}}$, and therefore
\beqnast
\alpha_{h^{-1}}(\yfx) 
& = & \alpha_{h^{-1}}(\yfx \ _x^{}e_x^{g^{-1}})
 =  h^{-1} \cdot (\yfx (g^{-1} \cdot \xidx )) 
 =  (h^{-1} \cdot \yfx )(h^{-1} \cdot (g^{-1} \cdot \xidx )) 
 \\
& = & (h^{-1} \cdot \yfx )(h^{-1} \cdot \xidx)(h^{-1}g^{-1} \cdot \xidx )
 =  (h^{-1} \cdot \yfx )((gh)^{-1} \cdot \xidx ) 
 \\
& = & (h^{-1} \cdot \yfx )\ _x^{}e_x^{(gh)^{-1}}
\eqnast
which lies in $_y \ci^{(gh)^{-1}}_x$. Finally, consider elements of 
$\alpha_{h^{-1}}(_{y}{\cal I}_{x}^{h} \cap \ _{y}{\cal I}_{x}^{g^{-1}})$. They are of the 
form $(h^{-1} \cdot \yfx) \ _xe_x^{(gh)^{-1}}$, where $\yfx \in\ _{y}{\cal I}_{x}^{h} \cap \ _{y}{\cal I}_{x}^{g^{-1}}$, 
and it can be shown that 
\[
\alpha_{gh}((h^{-1} \cdot \yfx) \ _x^{}e_x^{(gh)^{-1}})
= (g \cdot \yfx) \ _x^{}e_x^{gh} =\alpha_g (\alpha_h((h^{-1} \cdot \yfx)\ _x^{}e_x^{(gh)^{-1}}
\]

Therefore, the data $(\{ {\cal I}^{g} \}_{g \in G}, \{ \alpha_{g} \}_{g \in G} )$ define a partial action of $G$ on $\mC$. 

On the other hand, if $(\{ {\cal I}^{g} \}_{g \in G}, \{ \alpha_{g} \}_{g \in G} )$ provide a partial action of $G$ on $\mC$, such that $G$ acts trivially on $\mC_{0}$ and every ideal $\ci^g$ is  generated by a central idempotent  
$e^g = \{ \back \xegx \}_{x \in \mC_{0}}$, then  the linear maps 
\beqnast
kG \ot \yCx & \rightarrow & \yCx \\
g \ot \yfx & \mapsto & \ug \cdot \yfx = \alpha_{g} (\yfx \circ  \xegx ) 
\eqnast
define a  partial action of $kG$ on $\mC$.
\end{proof}

\section{Globalization of partial $H$-actions}\label{globalization}

Globalization of partial actions first appeared in the context of partial group actions 
on $C^*$ algebras \cite{A}, then a totally algebraic formulation for the group case 
appeared in \cite{DE}. For partial actions of Hopf algebras, the globalization theorem was 
proved in \cite{AB1}. Basically, it states that every partial action of a Hopf algebra $H$ on a unital algebra $A$ admits a globalization. A globalization, or an enveloping action, means that there is a pair $(B,\varphi)$, where $B$ is an $H$-module algebra and $\varphi :A\rightarrow B$ 
is a monomorphism of algebras such that $\varphi (A)$ is an ideal in $B$ and the partial action 
on $A$ can be viewed as an induced partial action on the ideal $\varphi (A)$. In this section 
we are going to prove that a partial $H$-module category $\mC$ always has a globalization. 
Roughly speaking, we will follow the same ideas as in the globalization for partial $H$-module algebras: we will define  an 
$\mC_{0}$-semicategory $\mB$, which is an $H$-module (semi)category and a faithful
semifunctor $F:\mC \rightarrow \mB$ such that the partial action of $H$ on $\mC$ can be thought as 
a restriction of the global action $\mB$. 
Moreover, we will prove the existence of a minimal globalization, which is unique up to isomorphism.

So we start by giving the definition of globalization.

\begin{defn} Let $\mC$ be a partial $H$-module category. A {\bf globalization} 
of the partial action is a pair $(\mB, F)$ where 
\benu
\item[(a)] $\mB$ is an $H$-module semicategory over $\mC_{0}$, with action $\rhd$.
\item[(b)] $F: \mC \rightarrow \mB$ is a faithful $\mC_{0}$-semifunctor and $F(\mC)$ is an ideal of $\mB$, generated by the central idempotent  $e = \{F(\xidx)\}_{x \in \mC_0}$.
\item[(c)] $\mB = H \rhd F(\mC)$.
\item[(d)] $F$ intertwines the partial action on $\mC$ and the induced partial action 
on $F(\mC)$: that is, for every $\yfx \in \yCx$, we have
\[F(h \cdot  \yfx ) = F( \yidy) (h \rhd F( \yfx)) = (h \rhd F( \yfx)) F( \xidx).\] 
\enu
Moreover, a globalization $(\mB ,F)$ is said to be minimal if, in addition, it satisfies
\benu
\item[(e)] For every pair $x,y\in \mC_0$, if we have 
\[
\sum_{i=1}^n kh_i \cdot {}_yf^{i}_x =0, \qquad \forall k\in H,
\]
then
\[
\sum_{i=1}^n h_i \rhd F({}_yf^{i}_x)=0
\]
\enu
\end{defn}

This definition is motivated by the definition of an enveloping action for a partial $H$-action 
on a unital algebra $A$ given in \cite{AB1} and generalizes it in the following sense.

\begin{prop} Let $\mC$ be a partial $H$-module category and $(\mB ,F)$ be a 
(minimal) globalization for this partial action. Then, for every $x\in \mC_0$, the 
algebra ${}_x\mB_x$ with the morphism $F: {}_x\mC_x \rightarrow {}_x\mB_x$ is a 
(minimal) globalization for the partial action of $H$ on ${}_x\mC_x$.
\end{prop}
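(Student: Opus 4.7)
The plan is to verify each of the defining conditions of a (minimal) algebraic globalization by restricting the corresponding semicategorical conditions of $(\mB, F)$ to the hom-space at the single object $x$. The key principle, which makes all of this work without having to do anything new, is that since $H$ acts trivially on objects the action $\rhd$ on $\mB$ preserves every morphism space, and in particular stabilizes $\xBx$. Thus $\xBx$ inherits the structure of an $H$-module algebra (possibly non-unital), with composition playing the role of multiplication, and the restriction $F|_{\xCx}:\xCx\to\xBx$ is a well-defined injective algebra homomorphism: injectivity is the faithfulness of $F$ as a semifunctor, and multiplicativity is the fact that $F$ preserves composition.

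Next I would verify conditions (b)--(d) of the globalization definition at the level of $\xBx$. Since $F(\mC)$ is an ideal of $\mB$ generated by the central idempotent $e=\{F(\xidx)\}_{x\in\mC_0}$, restricting to the hom-space at $x$ gives
\[F(\xCx)=F(\xidx)\cdot \xBx\cdot F(\xidx)=F(\xidx)\cdot \xBx=\xBx\cdot F(\xidx),\]
so $F(\xCx)$ is an ideal of $\xBx$ generated by the idempotent $F(\xidx)$. The condition $\xBx=H\rhd F(\xCx)$ is the restriction of $\mB=H\rhd F(\mC)$: because the action preserves morphism spaces, the only elements of $H\rhd F(\mC)$ that can belong to $\xBx$ are those obtained by acting on $F(\xCx)$. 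Finally, the intertwining identity (d), taken at $y=x$, reads
\[F(h\cdot \yfx)=F(\xidx)\bigl(h\rhd F(\yfx)\bigr)=\bigl(h\rhd F(\yfx)\bigr)F(\xidx)\]
(with $y$ replaced by $x$), which is precisely the intertwining between the partial action on $\xCx$ and the induced action on the ideal $F(\xCx)\subseteq\xBx$.

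For the minimality clause, I would specialize condition (e) to $y=x$: if a family $\{{}_xf^i_x\}_{i=1}^n\subset\xCx$ satisfies $\sum_i kh_i\cdot {}_xf^i_x=0$ for every $k\in H$, then $\sum_i h_i\rhd F({}_xf^i_x)=0$. This is exactly the minimality condition for the algebra globalization $F:\xCx\to\xBx$, so if $(\mB,F)$ is minimal so is its restriction.

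There is no real conceptual obstacle here; everything is a direct restriction. The one point to keep in mind is that even when $\mC$ has identities at each object, $\mB$ is only a semicategory, so $\xBx$ is a priori a non-unital algebra. This is consistent with the non-unital framework in which the globalization of partial $H$-module algebras is formulated in \cite{AB1}, with the idempotent $F(\xidx)$ playing the role of the local idempotent cutting out $F(\xCx)$ as an ideal inside $\xBx$.
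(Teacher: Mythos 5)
Your proposal is correct and follows essentially the same route as the paper: both arguments simply restrict each clause of the categorical globalization definition (the $H$-module structure, faithfulness of $F$, the ideal condition, $\mB = H\rhd F(\mC)$, the intertwining identity, and the minimality condition (e)) to the single hom-space ${}_x\mB_x$. Your added remarks that the action stabilizes each morphism space because $H$ acts trivially on objects, and that ${}_x\mB_x$ is a priori non-unital with $F({}_x1_x)$ as the local idempotent, are consistent with and slightly more explicit than the paper's own wording.
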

\begin{proof} As $\mB$ is an $H$-module semicategory, then ${}_x\mB_x$ is automatically 
an $H$-module algebra. As $F:\mC \rightarrow \mB$ is a faithful semifunctor, then, by 
definition, its restriction to ${}_x\mC_x$ is an algebra monomorphism. The fact that $F(\mC)$ 
is an ideal in the semicategory $\mB$ implies, also by definition, that $F({}_x\mC_x)$ is an ideal 
in ${}_x\mB_x$. Since $\mB =H\rhd F(\mC)$, the morphism space ${}_x\mB_x$ coincides with
$H\rhd F({}_x\mC_x)$. Let $f\in {}_x\mC_x$ and $h\in H$, then, by the item (d), we have
\[
F(h\cdot f)=F({}_x1_x)(h\rhd F(f)) ,
\]
which is the exact expression that intertwines the partial action of $H$ on ${}_x\mC_x$ and 
the induced partial action on $F({}_x\mC_x)$. Therefore, $({}_x\mB_x ,F)$ is a globalization 
for the partial action of $H$ on the algebra ${}_x\mC_x$.

Finally, if the globalization $(\mB ,F)$ is minimal, then (e) is valid for any pair 
$x,y\in \mC_0$, in particular for $y=x$, which gives the minimality condition for $({}_x\mB_x ,F)$.
\end{proof}

\begin{cor} If $\mC$ is a partial $H$-module category and $\mC_0$ is a unitary set, then any
globalization $(\mB ,F)$ coincides with a globalization of the partial action of $H$ on the 
unital algebra defined by the category $\mC$.
\end{cor}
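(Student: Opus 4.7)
The plan is to derive this corollary as a direct specialization of the preceding proposition, with essentially no new computation required, just an unpacking of the one-object case.

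First I would write $\mC_0=\{x\}$ and observe that by the definition of globalization the semicategory $\mB$ lives over the same object set $\mC_0$, so $\mB_0=\{x\}$ too. In that situation both $\mC$ and $\mB$ are determined by their unique (possibly non-unital, in the case of $\mB$) endomorphism spaces, namely ${}_x\mC_x$ and ${}_x\mB_x$: under the correspondence discussed in Example \ref{example_X_finite} (extended in the obvious way from unital algebras to the semicategory setting, where the $\mB$-side is a not-necessarily-unital $k$-algebra containing the idempotent $F({}_x1_x)$), the one-object $k$-category $\mC$ is identified with the unital algebra ${}_x\mC_x$ and the one-object $k$-semicategory $\mB$ is identified with the $k$-algebra ${}_x\mB_x$. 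Under this identification the $\mC_0$-semifunctor $F$ becomes simply the algebra morphism $F:{}_x\mC_x\to{}_x\mB_x$.

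Next I would invoke the preceding proposition with this single object $x$. That proposition already asserts that $({}_x\mB_x,F)$ is a (minimal) globalization, in the algebraic sense of \cite{AB1}, of the partial action of $H$ on the unital algebra ${}_x\mC_x$; this is exactly the datum that one gets by reading off our categorical globalization $(\mB,F)$ in the one-object case. Hence the two notions coincide.

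There is no genuine obstacle here; the only thing to double-check is that each clause of the categorical definition of globalization (items (a)–(e)) reduces, under the identifications $\mB\leftrightarrow{}_x\mB_x$ and $\mC\leftrightarrow{}_x\mC_x$, to the corresponding clause of the algebraic definition in \cite{AB1}. Clauses (a) and (c) are immediate; (b) becomes the statement that $F({}_x\mC_x)$ is an ideal of ${}_x\mB_x$ generated by the central idempotent $F({}_x1_x)$; (d) becomes the intertwining identity $F(h\cdot f)=F({}_x1_x)(h\rhd F(f))$; and (e) becomes the minimality condition of \cite{AB1}, since in the one-object case the quantifier over pairs $x,y\in\mC_0$ collapses to the single pair $(x,x)$. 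All of these are already accounted for in the preceding proposition, and therefore the corollary follows.
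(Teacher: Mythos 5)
Your proposal is correct and follows exactly the route the paper intends: the corollary is stated as an immediate consequence of the preceding proposition, obtained by specializing to the single object $x$ and identifying the one-object (semi)categories with their endomorphism algebras, which is precisely what you do. Your extra check that each clause (a)--(e) of the categorical definition collapses to the corresponding algebraic clause of \cite{AB1} is a reasonable bit of diligence that the paper leaves implicit.
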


Some remarks about the definition above are necessary. Item (a) has no analogue for 
partial actions on algebras, it appears only in the categorical context. Item (e) is 
important because only in the minimal case one can prove uniqueness of the globalization up 
to isomorphism.     

We shall prove that any partial $H$-module category admits a globalization satisfying the
conditions of the definition above, following the same steps used in \cite{AB1}. We shall 
first give a definition.

\begin{defn}
Given a Hopf $k$-algebra $H$ and a $k$-linear category $\mC$, the category $\Hom_{k}(H, \mC)$ is defined as follows:
\begin{enumerate}
\item[(a)] Its objects are the objects of $\mC$.
\item[(b)] Given $x$ and $y\in \mC_0$, let $_{y}\Hom_{k}(H, \mC)_{x} := \Hom_{k}(H,\yCx)$, with
composition given by the convolution product
\[_{z}\Hom_{k}(H, \mC)_{y} \otimes \ _{y}\Hom_{k}(H, \mC)_{x} \rightarrow \ _{z}\Hom_{k}(H, \mC)_{x}\]
\[(\zfy *  \ygx)(h) = \sum  \zfy(h_{(1)}) \circ  \ygx(h_{(2)}).\]
\item[(c)] The action of $H$ on $\Hom_{k}(H, \mC)$ is: 
\[(h \rhd  \yfx)(k) =  \yfx(kh).\]
\end{enumerate}
\end{defn}

The aim of this section is to prove that any partial $H$-module category has a minimal globalization.

\begin{thm}\label{minimal globalization} 
Every  partial $H$-module category $\mC$ has a minimal globalization, which is unique up to isomorphism.
\end{thm}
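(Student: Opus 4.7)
The plan is to adapt to the $\mC_0$-semicategory setting the globalization construction for partial Hopf actions on algebras given in \cite{AB1}. Define $\Phi : \mC \to \Hom_k(H, \mC)$ fixing objects and sending $\yfx$ to the $k$-linear map $h \mapsto h\cdot \yfx$. Axiom \eqref{partialH2:condicion2} yields $\Phi(\zfy \circ \ygx) = \Phi(\zfy) * \Phi(\ygx)$, and \eqref{partialH1:condicion1} gives $\Phi(\yfx)(1_H) = \yfx$, so $\Phi$ is a faithful $\mC_0$-semifunctor. Now let $\mB$ be the sub-$\mC_0$-semicategory of $\Hom_k(H,\mC)$ defined by
\[
\yBx \;=\; H \rhd \Phi(\yCx) \;=\; \Bigl\{\,\textstyle\sum_i h_i \rhd \Phi({}_y f^i_x) : h_i \in H,\ {}_y f^i_x \in \yCx \,\Bigr\}.
\]
The first step is to verify that this family of subspaces is stable under both the convolution product $*$ and the $H$-action $\rhd$, so that $\mB$ becomes an $H$-module $\mC_0$-semicategory. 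This is a direct calculation expanding $(h \rhd \Phi(\zfy)) * (k \rhd \Phi(\ygx))$ and using coproducts together with both presentations of axiom \eqref{partialH3:condicion3} and the derived identity \eqref{useful}, paralleling the algebra computation of \cite{AB1}.

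Next I would verify axioms (b)--(d) of the globalization. The first equality in \eqref{partialH3:condicion3} says $h\cdot(k\cdot\yfx) = \sum (h_{(1)}\cdot\yidy)\circ((h_{(2)}k)\cdot\yfx)$; evaluating $\Phi(\yidy)\circ(h\rhd\Phi(\yfx))$ at $k$ produces exactly this, hence
\[
\Phi(h\cdot \yfx) \;=\; \Phi(\yidy) \circ (h \rhd \Phi(\yfx)),
\]
and symmetrically for the second form, giving (d). Setting $h = 1_H$ in (d) shows that each $\Phi(\xidx)$ is an idempotent and that $\Phi(\mC)$ is absorbed on both sides by $e = \{\Phi(\xidx)\}_{x\in\mC_0}$, so $e$ is a central natural transformation in $\mB$ and $\Phi(\mC)$ is precisely the ideal it generates, proving (b). Property (c) holds by construction of $\mB$. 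For minimality (e), if $\sum_i h_i \rhd \Phi({}_y f^i_x) = 0$ in $\mB$, then applying $k\rhd$ and composing on the left with $\Phi(\yidy)$ gives, using (d), $\sum_i \Phi((kh_i)\cdot {}_y f^i_x) = 0$ for every $k \in H$; faithfulness of $\Phi$ then yields $\sum_i (kh_i)\cdot {}_y f^i_x = 0$ for every $k$, which is the contrapositive of (e).

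For uniqueness up to isomorphism, let $(\mB', F')$ be another minimal globalization and define $\psi : \mB \to \mB'$ fixing objects and sending
\[
\psi\!\left(\sum_i h_i \rhd \Phi({}_y f^i_x)\right) \;=\; \sum_i h_i \rhd F'({}_y f^i_x).
\]
Well-definedness is the heart of the argument: if the left-hand side vanishes in $\mB$, the computation just used for (e) combined with the faithfulness of $\Phi$ forces $\sum_i (kh_i)\cdot {}_y f^i_x = 0$ for every $k \in H$, and now condition (e) applied to $(\mB', F')$ yields $\sum_i h_i \rhd F'({}_y f^i_x) = 0$. By construction $\psi$ is $H$-equivariant, is a $\mC_0$-semifunctor since convolution is defined by the same formula on both sides, and satisfies $\psi\circ\Phi = F'$. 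A symmetric argument produces an inverse semifunctor, and $\psi$ is the required isomorphism.

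The chief obstacle is the closure computation in the first paragraph: verifying that $H \rhd \Phi(\mC)$ is stable under the convolution product inside $\Hom_k(H,\mC)$, and that $\Phi(\mC)$ sits inside $\mB$ as a two-sided ideal generated by the central idempotent $e$. Because $H$ is not assumed cocommutative, both presentations of \eqref{partialH3:condicion3} together with \eqref{useful} must be used carefully. Once these technical points are settled, the remaining axioms and the uniqueness argument are a straightforward transcription of the algebra case from \cite{AB1} into the $\mC_0$-semicategory language.
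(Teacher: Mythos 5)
Your construction coincides with the paper's: the faithful semifunctor $\Phi$ into $\Hom_k(H,\mC)$, the subsemicategory $\mB=H\rhd\Phi(\mC)$, closure under composition via the antipode, and a comparison semifunctor between two minimal globalizations whose well-definedness rests on faithfulness of the source functor and minimality of the target. Two steps need repair. First, your verification of the minimality axiom (e) proves the wrong implication: you show that $\sum_i h_i\rhd\Phi({}_yf^i_x)=0$ forces $\sum_i (kh_i)\cdot {}_yf^i_x=0$ for all $k\in H$, which is the \emph{converse} of (e), not its contrapositive, and so does not establish (e). The implication actually required --- from the vanishing of $\sum_i (kh_i)\cdot{}_yf^i_x$ for all $k$ to the vanishing of $\sum_i h_i\rhd\Phi({}_yf^i_x)$ --- is immediate for this construction, since evaluating $\sum_i h_i\rhd\Phi({}_yf^i_x)$ at $k$ gives exactly $\sum_i (kh_i)\cdot{}_yf^i_x$; but as written the step is logically inverted. (The converse direction you did prove is precisely what is needed later for well-definedness of $\psi$, so that part of the uniqueness argument stands.) Second, the claim that $\psi$ is a semifunctor ``since convolution is defined by the same formula on both sides'' does not apply when $(\mB',F')$ is an abstract minimal globalization rather than a subsemicategory of $\Hom_k(H,\mC)$: multiplicativity of $\psi$ must be checked on generators by rewriting $(h\rhd \Phi(\zfy))\circ(k\rhd \Phi(\ygx))$ as $\sum h_{(1)}\rhd \Phi\bigl(\zfy\circ(S(h_{(2)})k\cdot\ygx)\bigr)$ using the antipode and axiom (d), and performing the parallel rewriting on the $\mB'$ side. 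This is the same antipode computation you already flagged as the chief obstacle for closure of $\mB$, so no new idea is needed, but it has to be carried out for $\psi$ as well.
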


\begin{proof}
Let $\mC$ be a partial $H$-module category with partial action denoted by a dot ``$\cdot$''. 
The previous formulas provide the $\mC_0$-category $\Hom_{k}(H, \mC)$ of the structure of and $H$-module category.

We next define the $\mC_0$-semifunctor $F: \mC \rightarrow \Hom_{k}(H, \mC)$ by $F(_{y}f_{x})(h) = h \cdot \ _{y}f_{x}$ for
every $x,y \in \mC_0$, $\yfx \in  \yCx$ and $h \in H$.  To verify that $F$ is, indeed, a semifunctor, take $x,y,z\in \mC_0$, ${}_zf_y\in {}_z\mC_y$, ${}_yg_x\in {}_y\mC_x$ and $h\in H$, then 
\begin{eqnarray*}
F({}_zf_y)*F({}_yg_x)(h) &=& \sum F({}_zf_y)(h_{(1)} \circ F({}_yg_x)(h_{(2)})
= \sum (h_{(1)} \cdot {}_zf_y)\circ (h_{(2)}\cdot {}_yg_x) \\
&=& h\cdot ({}_zf_y \circ {}_yg_x) = F({}_zf_y \circ {}_yg_x)(h). 
\end{eqnarray*}

The semifunctor $F$ is also faithful. Indeed, let $x,y\in \mC_0$ and ${}_yf_x \in {}_y \mC_x$ such that $F({}_yf_x)=0$. Then, for every $h\in H$, we have
\[
0=F({}_yf_x)(h)=h\cdot {}_yf_x .
\]
In particular, putting $h=1_H$, we obtain ${}_yf_x =0$.

Consider now the $\mC_0$-subsemicategory $\mB$ of $\Hom_{k}(H, \mC)$ such that for $x$ and $y\in \mC_0$,
\begin{eqnarray*}
\yBx & = &  H \rhd F(\yCx) \\
& = &  \langle h \rhd F(\ _{y}f_{x})| h \in H, \ _{y}f_{x} \in \yCx \rangle_{k}.
\end{eqnarray*}
In order to see that $F(\mC)$ is an ideal of $\mB$, 
note first that  given $h \in H$, $x$, $y$ and $z \in \mC_0$, $\back \ygx \in \yCx$ and $\back \zfy \in \zCy$, 
we have
\beqna
F(\zfy)(h \rhd F(\ygx)) & = &  F(\zfy(h \cdot \ygx)) \label{idealdir}\\
(h \rhd F(\zfy))F(\ygx) & = &  F((h \cdot \zfy)\ygx) \label{idealesq}
\eqna
since
\beqnast
F(\zfy))(h \rhd F(\ygx))(k) & = & \sum F(\zfy))(k_{(1)})(h \rhd F(\ygx))(k_{(2)}) 
 =  \sum (k_{(1)} \cdot \zfy)(k_{(2)}h \cdot \ygx)
 \\
&\overset{\text{(\ref{useful})}}{=} & (k \cdot \zfy (h \cdot \ygx)) 
=  F(\zfy (h \cdot \ygx))(k). 
 \eqnast
The proof of the second equality is analogous.

Now, composing generators  of $\back \zBy$ and $\back \yBx$, we get 
\beqnast
(h \rhd F(\zfy))(k \rhd F(\ygx)) 
& = & \sum (h_{(1)} \rhd F(\zfy))(\varepsilon(h_{(2)}) k \rhd F(\ygx))\\ 
& = & \sum (h_{(1)} \rhd F(\zfy))(h_{(2)} \rhd S(h_{(3)}) k \rhd F(\ygx)) \\
& = & \sum (h_{(1)} \rhd [ F(\zfy)(S(h_{(2)}) k \rhd F(\ygx))]  \\
& = & \sum (h_{(1)} \rhd F(\zfy (S(h_{(2)}) k \cdot \ygx )),
\eqnast
which lies in $\back \zBx$, and therefore $\mB$ is actually a subsemicategory of $\Hom_k(H,\mC)$.  

It follows from equations (\ref{idealdir}) and (\ref{idealesq}) that if we define $\back \xex = F(\xidx)$ 
for each $x \in \mB_0$, then $e = \{\xex\}_{x \in \mB_0}$
is a central idempotent of $\mB$ and $F(\mC)$ is the ideal generated by $e$. 
Since
\[
F(h \cdot \yfx) = F(\yidy) (h \rhd F( \yfx)) = (h \rhd F( \yfx))F( \xidx) 
\]
for every $\back \yfx \in \yCx$ and $h \in H$, the pair $(\mB, F)$ is a globalization of 
the partial $H$-module category $\mC$.

In addition, it is possible to prove that $(\mB ,F)$ as constructed above is actually a minimal globalization. Indeed, let $x,y\in \mC_0$, $h_1, \ldots , h_n \in H$ and $f^1, \ldots ,f^n \in {}_y\mC_x$ such that, for every $k\in H$
\[
\sum_{i=1}^n kh_i\cdot f^i =0 .
\] 
Then, given $k\in H$, we have
\[
\left( \sum_{i=1}^n h_i\rhd F(f^i)\right) (k) 
=  \sum_{i=1}^n (h_i\rhd F(f^i) ) (k) 
=  \sum_{i=1}^n F(f^i) (kh_i )
=  \sum_{i=1}^n kh_i\cdot f^i =0 
\]
which leads to the conclusion that $\sum_{i=1}^n h_i\rhd F(f^i)=0$, proving that this globalization satisfies the minimality condition.

For the uniqueness, consider two minimal globalizations of $\mC$, $(\mA ,F)$ and 
$(\mB , G)$, where the global actions of $H$ on $\mA$ and on $\mB$ are denoted, respectively, by $\triangleright$ and $\blacktriangleright$. Define the $\mC_0$ semifunctor $\Phi :\mA \rightarrow  \mB$ such that for any pair $x,y\in \mC_0$ 
\[
\Phi \left( \sum_{i=1}^n h_i \triangleright F({}_yf^i_x) \right) =\sum_{i=1}^n h_i \blacktriangleright G({}_yf^i_x) .
\]
This functor is given in this form because $\mA =H\triangleright F(\mC )$ and $\mB =H\blacktriangleright G(\mC )$. First, one needs to prove that the maps $\Phi: {}_y\mA_x \rightarrow {}_y\mB_x$ above are well defined, as $k$-linear morphisms, for every pair $x,y\in \mC_0$, which corresponds to prove that if $\sum_{i=1}^n h_i \triangleright F({}_yf^i_x) =0$ then  $\sum_{i=1}^n h_i \blacktriangleright G({}_yf^i_x) =0$. 

 Fix $x,y\in \mC_0$ and take elements $h_1 ,\ldots ,h_n \in H$ and $f^1 ,\ldots ,f^n \in {}_y\mA_x$ such that 
\[
X=\sum_{i=1}^n h_i \triangleright F(f^i) =0
\]
Then, for all $k\in H$ we have
\[
0=F({}_y1_y )(k\triangleright X)=F({}_y1_y )\left( \sum_{i=1}^n kh_i \triangleright F(f^i) \right) =F\left( \sum_{i=1}^n kh_i \cdot f^i \right) .
\]
As $F$ is a faithful functor, we obtain that 
\[
\sum_{i=1}^n kh_i \cdot f^i =0 ,
\]
for any $k\in H$. And finally, as $(\mB ,G)$ is a minimal, this implies that
\[
\sum_{i=1}^n h_i \blacktriangleright G(f^i) =0 .
\]

By construction, $\Phi$ automatically intertwins the global actions of $H$ on $\mA$ and on $\mB$. In order to verify that $\Phi$ is, indeed, a semifunctor, consider $x,y,z\in \mC_0$, ${}_zf_y \in {}_z\mA_y$, ${}_yg_x \in  {}_y\mA_x$ and $h,k\in H$. Then
\begin{eqnarray}
\Phi ((h\triangleright F({}_zf_y ))\circ (k\triangleright F({}_yg_x )) &=& \Phi \left( \sum (h_{(1)}\triangleright F({}_zf_y ))\circ (h_{(2)}S(h_{(3)})k\triangleright F({}_yg_x )) \right)\nonumber\\
&=&  \Phi \left( \sum h_{(1)}\triangleright 
(F({}_zf_y )\circ (S(h_{(2)})k\triangleright F({}_yg_x ))) \right)\nonumber\\
&=&  \Phi \left( \sum h_{(1)}\triangleright 
F({}_zf_y \circ (S(h_{(2)})k\cdot {}_yg_x )) \right)\nonumber\\
&=&   \sum h_{(1)}\blacktriangleright 
G({}_zf_y \circ (S(h_{(2)})k\cdot {}_yg_x )) \nonumber\\
&=&   \sum h_{(1)}\blacktriangleright (
G({}_zf_y) \circ G(S(h_{(2)})k\cdot {}_yg_x )) \nonumber\\
&=&   \sum h_{(1)}\blacktriangleright (
G({}_zf_y) \circ (S(h_{(2)})k \blacktriangleright G({}_yg_x ))) \nonumber\\
&=&   \sum (h_{(1)}\blacktriangleright 
G({}_zf_y)) \circ (h_{(2)} \blacktriangleright (S(h_{(3)})k \blacktriangleright G({}_yg_x ))) \nonumber\\
&=&   (h\blacktriangleright 
G({}_zf_y)) \circ (k \blacktriangleright G({}_yg_x )) \nonumber\\
&=&  \Phi (h\triangleright F({}_zf_y ))\circ 
\Phi (k\triangleright F( {}_yg_x )) . \nonumber
\end{eqnarray}
By a completely analogue argument, on can construct a semifunctor $\Psi :\mB \rightarrow \mA$ fixing objects and for all $x,y\in \mC_0$, $\Psi :{}_y\mB_x \rightarrow {}_y\mA_x$ is given by
\[
\Psi \left( \sum_{i=1}^n h_i \blacktriangleright G({}_yf^i_x) \right) =\sum_{i=1}^n h_i \triangleright F({}_yf^i_x) .
\]
it is trivial to prove that the semifunctors $\Phi$ and $\Psi$ are mutually inverses, therefore these two minimal globalizations of the partial action of $H$ on $\mC$ are isomorphic. 
\end{proof}

\subsection{Globalization of the partial actions of $k^G$ on $k$}

Let $\Lambda_H \in \Hom(k^G,k)$ be the idempotent associated to a partial action of $k^G$  induced by $k$. It is well-known that 
\beqnast
\Theta: \Hom(k^G,k) & \rightarrow & k G \\
f & \mapsto & \sum_g f(g) \delta_g
\eqnast
is an isomorphism of Hopf algebras.
We will identify $\Hom(k^G,k)$ and  $k G$  via this isomorphism.

Following the proof of the globalization theorem, consider the composite map 
\[
k \overset{\varphi}{\longrightarrow} \Hom(k^G,k)  \overset{\Theta}{\longrightarrow}  k G 
\]
where $\varphi$ is the map that takes $1 \in k$ to the map $ \varphi(1)(h) = h \cdot 1$, i.e., $\varphi(1) = \Lambda$.  Therefore $(\Theta \circ \varphi)(1) = \Theta (\Lambda) = \frac{1}{|H|} \sum_{h \in H} \delta_h $, an idempotent which we will call $e_H$.

The canonical action of $k^G$ on $\Hom(k^G,k)$ corresponds, via $\Theta$, to the action of  $k^ G$ on  $kG$ associated to the $G$-grading of $k^G$: $p_g \rhd \sum a_s \delta_s = a_g$. The subalgebra $B$ of $kG$ that contains $k^G$ as an ideal is generated by the elements $p_g \rhd e_H$, with $g \in G$, and 
\[p_g \rhd e_H = 
\left\{
\begin{array}{ll}
\small{\frac{1}{|H|}}\delta_g \text{ if }g \in H \\
0 \text{ otherwise}
\end{array}
\right.
\] 
Therefore, the globalization $B$ is the $k^G$-module algebra $kH \subset kG$. 

\bigskip

\section{Partial smash products of categories}\label{partial smash}

In this section, we generalize the construction of partial smash products, as defined in \cite{CJ} to the context of partial $H$ module categories. 

First, let us make a brief recall of the construction of partial smash products for algebras. Let $H$ be a Hopf algebra acting partially on a unital algebra $A$. One can define a product on the $k$ vector space $A\otimes H$, given by
\[
(a\otimes h)(b\otimes k)=\sum a(h_{(1)}\cdot b)\otimes h_{(2)}k.
\]
This product is automatically associative because of the axioms of partial actions. It is also easy to see that the element $1_A \otimes 1_H$ is a left unit with respect to this product. In order to obtain a unital algebra, one needs to take the right ideal
\[
\underline{A\# H}=(A\otimes H)(1_A \otimes 1_H) =\{ \sum a(h_{(1)}\cdot 1_A)\otimes h_{(2)} \, | \, a\in A ,\; h\in H \} .
\]
This is the so-called partial smash product. In the particular case of $H$ being a group algebra $kG$, this partial smash product coincides with the partial skew group ring, as defined in \cite{DE}.

Next, we will show that just as in the case of partial $H$-module algebras, we can define a smash product for partial $H$-module categories which, of course, generalizes also the smash product construction for categories \cite{CS}. Following the same steps as in 
\cite{CJ}, we begin with an intermediate semicategory. 

Let $\mA$ be a partial $H$-module partial category. Consider the $\mA_0$-semicategory 
$\mA \otu H$ which,  for every $x, y \in \mA_0$, has morphism space defined by 
\[
{}_{y}(A \otu H)_{x} = \ _{y}A_{x} \otimes H
\]
with composition given by
\[
(_{z}f_{y} \otimes h) \circ (_{y}g_{x} \otimes h') = \sum \zfy \circ 
(h_{(1)} \cdot \ygx ) \otimes h_{(2)}h'.
\]

The axioms of partial $H$-module category guarantee  that composition is well-defined and associative. 

In order to get an $\mA_0$-category with "local" units, which we will use to construct a globalization, we consider a right ideal in $\mA \otu H$. Note that the collection $e = \{\xidx \ot 1_H\}_{x \in \mA_0}$ is not central. However, it satisfies 
\beqna \label{unitsmash}
(\yidy \ot 1_H)(\yfx \ot h) & = & (\yfx \ot h)
\eqna
and, in particular, each morphism $\back \xidx \ot 1_H$ is an idempotent in the algebra $_x(\mA \otu H)_x$.

\begin{defn}
The partial smash  $H$-module category $\underline{\mA \# H}$ is the $\mA_0$-category   defined by 
\beqnast
\yASHx 
& = &\ _{y}(\mA \otu H)_{x} \circ (\xidx \otimes 1_{H}) \\
& = & \langle {}_{y}f_{x}\# h \mid \yfx \in \yAx, h \in H \rangle_k
\eqnast 
where ${}_{y}f_{x}\# h  = \sum \ _{y}f_{x} \circ (h_{(1)} \cdot _{x}1_{x}) \otimes h_{(2)} $ and the composition is the same one of $\mA \otu H$. 
\end{defn}

\begin{rem} Note that $\underline{\mA \# H}$ is indeed a category: by definition, it is the right ideal in $\mA \otu H$ generated by $e$, and hence composition is well-defined and associative. From equation (\ref{unitsmash}) it follows that the element 
$\back \xidx \ot 1_H$ is the identity of $\back \xASHx$, for each $x$.
\end{rem} 

In \cite{AB1} it was proved that if $(B\varphi)$ is a globalization of a partial action of a Hopf algebra $H$ on a unital algebra $A$, then there is an algebra monomorphism 
$\Phi :\underline{A\# H} \rightarrow B\#H$ given by
\[
\Phi (a\# h)=\sum \varphi (a) (h_{(1)}\rhd \varphi (b))\otimes h_{(2)} .
\]
By an analogue construction to the case of algebras, if $(\mB, F)$ is a globalization of a partial $H$-module category $\mA$, it is straightforward to verify that there is a faithful semifunctor from the partial smash product category $\ASH$ to the smash product $\mB \# H$. 

\begin{rem}
if $H$ is finite dimensional, then there is a canonical \emph{global} action of $H^*$ on $\underline{\mA \# H}$.
\end{rem}

In fact, $\mA \otimes H$ has  the canonical right $H$-comodule structure, as defined in \cite{CS}, given by the coaction 
$\rho: \mA \otimes H \rightarrow (\mA \otimes H )\otimes H$, which, for all 
$x,y\in \mA_0$ sends $\ _yf_x \otimes h  $ to $\sum \yfx \otimes h_{(1)} \otimes h_{(2)}$. Since the idempotent $\ _xe_x =  \xidx \otimes 1_H $ is a co-invariant, if $f \ _xe_x \in \underline{ \ _y\mA_x \# H}$ then 
\[
\rho(f \ _x e_x) = \rho (f) \rho( {}_x e_x) = \rho(f)( {}_x e_x \otimes 1) \in 
  \underline{{}_y\mA_x \# H} \otimes H
\] 
and  $\underline{\mA \# H}$ inherits the $H$-comodule structure induced by the restriction of $\rho$. All properties of $H$-comodule category are easily checked; for instance, 
\[
(\yidy \otimes \Delta)\rho (f \ _xe_x) = (\yidy \otimes \Delta)\rho (f) \rho ( \ _xe_x)
=
(\rho \otimes I_H)\rho (f) \rho ( \ _xe_x) = 
(\rho \otimes I_H)\rho (f \ _xe_x) . 
\]

When $H$ is finite-dimensional there is a correspondence between right  $H$-comodule categories and left $H^*$-module categories. In this particular case, the $H^*$-action is defined by 
\beqnast
 H^* \otimes \underline{ \ _y\mA_x \# H} & \rightarrow &  \underline{ \ _y\mA_x \# H} \\
 \varphi \otimes ( \ _yf_x \otimes h) & \mapsto & \sum \ _yf_x \otimes h_{(1)} \varphi (h_{(2)}) 
\eqnast

Now, consider a partial action of a Hopf algebra $H$ on a $X$-category $\mA$, where $X$ is a finite set. As in example \ref{example_X_finite}, we may consider the matrix algebra over $\mA$ 
\[
a(\mA )=\left\{ \left. ({}_yf_x )_{y,x} \, \right| \, {}_yf_x \in {}_y\mA_x \right\} .
\]
It is also possible to define the matrix algebra over the partial smash product category
\[
a(\underline{\mA \# H}) =\left\{ \left. ({}_yf_x\# h^{y,x})_{y,x} \, \right| \, {}_yf_x \in {}_y\mA_x ,\; h^{y,x}\in H \right\} .
\]

\begin{prop} There is a partial action of $H$ on the algebra $a(\mA )$ such that the partial smash product $\underline{a(\mA )\# H}$ is isomorphic to the matrix algebra $a(\underline{\mA \# H})$.
\end{prop}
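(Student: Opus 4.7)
The plan is to define the partial action on $a(\mA)$ entry-by-entry and then exhibit the isomorphism via a natural identification between $a(\mA)\otimes H$ and $a(\mA\otu H)$.

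First, given $M = ({}_yf_x)_{y,x} \in a(\mA)$ and $h\in H$, I set $h\cdot M = (h\cdot {}_yf_x)_{y,x}$, using the partial action of $H$ on $\mA$. The axioms $1_H\cdot M = M$ and $h\cdot(MN) = \sum (h_{(1)}\cdot M)(h_{(2)}\cdot N)$ reduce componentwise to \eqref{partialH1:condicion1} and \eqref{partialH2:condicion2} for $\mA$, since matrix multiplication is $(MN)_{y,x} = \sum_z {}_yM_z\circ {}_zN_x$. For the third axiom, the unit of $a(\mA)$ is the diagonal matrix $1_{a(\mA)} = \sum_x \xidx E_{x,x}$, so that $((h_{(1)}\cdot 1_{a(\mA)})(h_{(2)}k\cdot M))_{y,x} = (h_{(1)}\cdot \yidy)\circ ((h_{(2)}k)\cdot {}_yM_x)$, which by \eqref{partialH3:condicion3} in $\mA$ equals $h\cdot(k\cdot {}_yM_x)$; the right-symmetric identity is analogous. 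Hence $a(\mA)$ is a partial $H$-module algebra.

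Next, consider the $k$-linear map $\Phi\colon a(\mA)\otimes H \to a(\mA\otu H)$ sending $M\otimes h$ to the matrix whose $(y,x)$-entry is ${}_yM_x \otimes h$. This is a bijection because both spaces canonically decompose as $\bigoplus_{y,x}({}_y\mA_x\otimes H)$. The key point is that $\Phi$ is multiplicative: the product in $a(\mA)\otimes H$ used in defining the partial smash product,
\[
(M\otimes h)(N\otimes k) = \sum M(h_{(1)}\cdot N)\otimes h_{(2)}k ,
\]
has $(y,x)$-entry $\sum_z \sum {}_yM_z \circ (h_{(1)}\cdot {}_zN_x)\otimes h_{(2)}k$, which coincides entry-by-entry with $\sum_z ({}_yM_z\otimes h)\circ ({}_zN_x\otimes k)$ computed using the composition in $\mA\otu H$.

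To finish, I restrict $\Phi$ to the partial smash product. Under $\Phi$ the element $1_{a(\mA)}\otimes 1_H$ becomes the diagonal matrix $D$ with $D_{x,x} = \xidx\otimes 1_H$. For any matrix $A \in a(\mA\otu H)$ one computes $(AD)_{y,x} = A_{y,x}\circ(\xidx\otimes 1_H) \in \underline{{}_y\mA_x\# H}$, so
\[
\Phi\bigl(\underline{a(\mA)\# H}\bigr) = \Phi\bigl((a(\mA)\otimes H)(1_{a(\mA)}\otimes 1_H)\bigr) = a(\mA\otu H)\cdot D = a\bigl(\underline{\mA\# H}\bigr),
\]
and $\Phi$ restricts to the desired algebra isomorphism. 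The main obstacle is bookkeeping: one must carefully separate the ``big'' algebra $a(\mA)\otimes H$ from the right ideal cut out by $1_{a(\mA)}\otimes 1_H$ on both sides and check that the componentwise identification is simultaneously multiplicative and preserves these right-ideal structures. Everything then reduces to the axioms of a partial $H$-module category already in hand.
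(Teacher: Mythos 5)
Your proposal is correct and follows essentially the same route as the paper: the same componentwise partial action $h\bullet({}_yf_x)=(h\cdot{}_yf_x)$, verified by reducing each axiom to \eqref{partialH1:condicion1}--\eqref{partialH3:condicion3}, and the same entrywise identification ${}_yf_xE_{y,x}\#h\mapsto({}_yf_x\#h)E_{y,x}$. The only (harmless) organizational difference is that you check multiplicativity on the full algebra $a(\mA)\otimes H\cong a(\mA\otu H)$ and then cut down by the idempotent $1_{a(\mA)}\otimes 1_H$, whereas the paper verifies the map directly on generators of the partial smash products.
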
 

\begin{proof} Define the map $\bullet :H\otimes a(\mA)\rightarrow a(\mA )$ given by 
\[
h\bullet \left( {}_yf_x \right)_{y,x} =\left( h\cdot {}_yf_x \right)_{y,x} ,
\]
which is clearly linear. Also obvious is the fact that, for any matrix $A\in a(\mA)$, we have $1_H\bullet A=A$. Now, take two matrices $A=\left( {}_yf_x \right)_{y,x}$ and $B=\left( {}_yf_x \right)_{y,x}$ and $h\in H$, then
\begin{eqnarray}
h\bullet AB &=& \left( h\cdot (\sum_{z\in \mA_0} {}_yf_z \circ {}_zg_x) \right)_{y,x}\nonumber\\
&=& \left( \sum_{z\in \mA_0} \sum (h_{(1)}\cdot  {}_yf_z ) \circ (h_{(2)} \cdot {}_zg_x) \right)_{y,x} \nonumber\\
&=& \sum (h_{(1)}\bullet A)(h_{(2)}\bullet B)\nonumber
\end{eqnarray}
The composition law can be also verified for any $h,k\in H$: recalling that the unity of $a(\mA)$ is 
$1_{a(\mA)} = {}_{x_1}1_{x_1}E_{1,1} + \cdots + {}_{x_1}1_{x_1}E_{1,1}$, we have  
\begin{eqnarray}
h\bullet (k\bullet A) &=& \left( h\cdot ( k\cdot {}_yf_x ) \right)_{y,x} \nonumber\\
&=&  \left( \sum (h_{(1)}\cdot ( k\cdot {}_yf_x ) \right)_{y,x} \nonumber\\
&=&  \left( \sum (h_{(1)}\cdot {}_y1_y)( h_{(2)}k\cdot {}_yf_x ) \right)_{y,x} \nonumber\\
& = & \sum (h_{(1)} \bullet 1_{a(\mA)})(h_{(2)}k \bullet A)\nonumber
\end{eqnarray}
Therefore we can consider the partial smash product $\underline{a(\mA) \# H}$. The $k$-linear map 
$\Phi': a(\mA ) \ot H \rightarrow  a(\underline{\mA \# H})$ defined on generators by 
\beqnast
\Phi': a(\mA) \ot H & \rightarrow & a(\underline{\mA \# H})  \\
{}_yf_xE_{y,x} \ot h & \mapsto & 
({}_yf_x\# h)E_{y,x} 
\eqnast
induces the $k$-linear map 
\beqnast
\Phi: \underline{a(\mA) \# H} & \rightarrow & a(\underline{\mA \# H})  \\
{}_yf_xE_{y,x} \# h & \mapsto & 
({}_yf_x\# h)E_{y,x} 
\eqnast
which is an isomorphism of $k$-algebras. In fact, it is clear that its inverse is
\beqnast
\Psi:a(\underline{\mA \# H})  & \rightarrow &  \underline{a(\mA) \# H} \\
({}_yf_x\# h)E_{y,x}  & \mapsto & 
{}_yf_xE_{y,x} \# h
\eqnast
so it is enough to check that $\Phi$ is an algebra morphism. Given
\ ${}_wg_zE_{w,z} \# k$ and \ ${}_yf_xE_{y,x} \# h$ in $\underline{a(\mA) \# H}$, 
\beqnast 
\Phi({}_wg_zE_{w,z} \# k)\Phi({}_yf_xE_{y,x} \# h) 
& = & ({}_wg_z \# k)E_{w,z} \ \ ({}_yf_x\# h)E_{y,x} \\
& = & \delta_{z,y}({}_wg_z \# k)({}_yf_x\# h)E_{w,x} \\
& = & \sum \delta_{z,y}({}_wg_z (k_{(1)} \cdot{}_yf_x)\# k_{(2)}h) \ E_{w,x} . 
\eqnast 
On the other hand, 
\beqnast
({}_wg_zE_{w,z} \# k)({}_yf_xE_{y,x} \# h)
& = & \sum ({}_wg_zE_{w,z}) (k_{(1)} \bullet ({}_yf_xE_{y,x})) \# k_{(2)}h\\
& = & \sum ({}_wg_zE_{w,z}) ((k_{(1)} \cdot {}_yf_x)E_{y,x}) \# k_{(2)}h\\
& = & \sum \delta_{z,y} \ ({}_wg_z (k_{(1)} \cdot {}_yf_x))E_{w,x} \# k_{(2)}h
\eqnast
and it follows that 
\beqnst
\Phi(({}_wg_zE_{w,z} \# k)({}_yf_xE_{y,x} \# h)) =
\Phi({}_wg_zE_{w,z} \# k)\Phi({}_yf_xE_{y,x} \# h) . 
\eqnst
\end{proof}

\section{Morita Equivalence}\label{Morita}

Following \cite{CS}, we say that two categories $\mA$ and $\mB$ are Morita equivalent $(\mA \sim_{M} \mB)$ if and only if the corresponding module categories are equivalent. 

We start this section by recalling that given a $k$-category $\mA$, a left $\mA$-module $\cal M$ is a 
functor ${\cal M}:\mA \rightarrow \vect$, where 
\beqnast
{\cal M}:\mA_0 & \rightarrow &  (\vect)_0 \\
 x & \mapsto &  \ _{x}{\cal M}
\eqnast
 and, given $x,y \in \mA_0$,  
\beqnast
\cm: \ _{y}A_{x}  & \rightarrow & \Hom_k(\ _x \cm, \ _{y}\cm) \\
 \yfx  & \mapsto & \cm( \back \yfx):  \ _{x}m \mapsto \yfx \ _xm.
\eqnast

In other words, a left $\mA$-module $\cal M$ is a
collection of $k$-modules $\{ _{x}{\cal M} \}_{ x \in \mA_0}$ provided with a left action on the $k$-modules of morphisms of $\mA$, given by $k$-module maps $_{y}A_{x} \otimes_{k} \ _{x}{\cal M} \rightarrow _{y}{\cal M}$, where the image of $\yfx \ot \ _x m$ will be denoted by $_{y}f_{x} \ _{x}m$ and satisfying the usual axioms 
\beqnast
_{z}f_{y} (\ygx \ {}_xm ) & = & (_{z}f_{y} \ygx) {}_{x}m, \\
_{x}1_{x} \ _{x}m & =& \ _{x}m.
\eqnast

Let $\mC$ be a $k$-category, and for each  $x \in \mC_0$ let $\back \xLx$ be an algebra which is Morita equivalent to $\back \xCx$. Let us fix a Morita context for each $x$: let $Q^x$ be a $(\back \xCx,\xLx)$-bimodule and $P^{x}$ be a $(\back \xLx,\xCx)$-bimodule; let
$\tau_x: Q^{x} \otimes_{\xLx} P^{x} \rightarrow \xCx$ be an isomorphism of $\back \xCx$-bimodules and $\sigma_x:  P^{x} \otimes_{\xCx} Q^{x} \rightarrow \xLx$ be an isomorphism of $\back \xLx$-bimodules such that 
\[\sigma(p \otimes q) p' = p \tau(q \otimes p')\\ \hbox{ and} \\  
\tau(q \otimes p) q' = q \sigma(p \otimes q').\]

Then, we can define a new $\mC_0$-category $\mD = \morita$ in the following way: 
\[\yDx = P^{y} \underset{\yCy}{\otimes} \yCx \underset{\xCx}{\otimes} Q^{x},\]
with composition defined by  
\[
(p^{z} \otimes \zfy \otimes q^{y}) \circ (p'^{y} \otimes \ygx \otimes q'^{x}) = p^{z} \ot (\zfy  \circ \tau_y (q^{y} \otimes p'^{y}) \circ \ygx) \otimes q'^{x},
\]
which is easily seen to be associative.
Note that $\back \xDx \simeq \back \xLx$ as $k$-algebras. By \cite[Prop. 4.4]{CS}, the $k$-categories $\mC$ and $\morita$ are Morita equivalent, where the equivalence is provided by the functor which fixes objects and acts as $P^y\otimes_{{}_y\mC_y} \hbox{--} \otimes_{{}_x\mC_x}Q^x$ on $\yCx$  for each pair of objects $x$, $y\in \mC_0$.

It is proved in \cite{AB1} that if $A$ is a partial $H$-module algebra and $B$ is a globalization, then the algebras $\underline{A \# H}$ and $B \# H$ are Morita equivalent. From this it follows that if $\mA$ is a partial $H$-module category and $\mB$ is a globalization then, for each $x \in \mA_0 = \mB_0$, the algebras $\back \xASHx$ and $_x(\cb \# H)_x$ are Morita equivalent. This is the first step in a construction of  
\cite{CS} of Morita equivalent categories, and here it will provide a Morita equivalence between $\ASH$ and $\mB \# H$.

\begin{thm}Let $\mA$ be a partial $H$-module category and let $\mB$ be a globalization; then $\underline{\mA \# H}$ and ${\mB} \# H$ are Morita equivalent.
\end{thm}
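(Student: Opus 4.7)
The plan is to deduce the theorem by combining the algebra-level Morita equivalence of \cite{AB1} with the categorical gluing of local Morita equivalences provided by \cite[Prop.~4.4]{CS}, which was recalled in the paragraph before the theorem. I work throughout over the common object set $\mA_0=\mB_0$ and, for each $x\in\mA_0$, I use the idempotent $e_x:=F(\ _x1_x)\otimes 1_H\in\ _x(\mB\#H)_x$.

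The first task is to set up object-wise Morita equivalences. For each $x\in\mA_0$ one has the canonical identifications $\ _x(\underline{\mA\#H})_x\cong\underline{\ _x\mA_x\#H}$ and $\ _x(\mB\#H)_x\cong\ _x\mB_x\#H$. Restricting $(\mB,F)$ to the endomorphism algebra at $x$, the proposition following the definition of globalization shows that $\ _x\mB_x$ is a globalization of the partial $H$-module algebra $\ _x\mA_x$ in the sense of \cite{AB1}, so the main theorem of that paper produces a Morita context $(P^x,Q^x,\tau_x,\sigma_x)$ between $\underline{\ _x\mA_x\#H}$ and $\ _x\mB_x\#H$. Explicitly, this context is of the form $P^x=e_x\cdot\ _x(\mB\#H)_x$ and $Q^x=\ _x(\mB\#H)_x\cdot e_x$, with $\tau_x$, $\sigma_x$ given by multiplication, reflecting the fact that the embedding of $\underline{\ _x\mA_x\#H}$ into $\ _x\mB_x\#H$ identifies it with the corner $e_x\ _x(\mB\#H)_x e_x$.

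Next I would apply \cite[Prop.~4.4]{CS} with $\mC:=\mB\#H$ and the above contexts to obtain an $\mA_0$-category $\mD:=\mD(P,\mB\#H,Q)$ which is Morita equivalent to $\mB\#H$ and whose hom spaces, using the explicit form of $P^y$ and $Q^x$, collapse via the standard identifications $eR\otimes_R M\cong eM$ and $M\otimes_R Re\cong Me$ to
\[
\ _y\mD_x \;\cong\; e_y\cdot\ _y(\mB\#H)_x\cdot e_x.
\]
To complete the proof I would identify $\mD$ with $\underline{\mA\#H}$. The natural semifunctor $\Phi:\underline{\mA\#H}\to\mB\#H$ induced by $F$ acts on generators by
\[
\Phi(\ _yf_x\#h)=\sum F(\ _yf_x)\bigl(h_{(1)}\rhd F(\ _x1_x)\bigr)\otimes h_{(2)}=(F(\ _yf_x)\otimes h)\cdot e_x;
\]
from this formula one checks at once that $\Phi(\ _y(\underline{\mA\#H})_x)\subseteq e_y\cdot\ _y(\mB\#H)_x\cdot e_x$, while the reverse inclusion is obtained by writing an arbitrary element of $\ _y\mB_x$ as $H\rhd F(\ _y\mA_x)$ (using condition (c) of the globalization) and applying a standard antipode manipulation to exhibit it as $\Phi$ of a generator. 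Faithfulness of $\Phi$ follows from the faithfulness of $F$ and the minimality of $\mB$. The resulting $\mA_0$-isomorphism $\underline{\mA\#H}\cong\mD$ preserves composition, since both sides inherit it from $\mB\#H$, and combined with $\mD\sim_M\mB\#H$ from \cite[Prop.~4.4]{CS} it yields the theorem.

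The principal obstacle is the last identification. The family $\{e_x\}_{x\in\mA_0}$ is \emph{not} a central idempotent of $\mB\#H$: the action of $H$ on the $F(\ _x1_x)$ is nontrivial, so centrality already fails inside the endomorphism algebras $\ _x(\mB\#H)_x$, and consequently the induced-partial-action framework of Example~\ref{hmodulocategoria} is not directly available. The matching of the image of $\Phi$ with the non-central ``corner'' carved out by the $e_x$ must therefore be verified by hand, using the explicit formulas above together with the generation property $\mB=H\rhd F(\mA)$; this is precisely the step where the categorical Morita-gluing machinery of \cite{CS} does the work that, in the algebra case of \cite{AB1}, was done by an honest central idempotent.
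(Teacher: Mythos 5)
Your proposal follows essentially the same route as the paper: object-wise Morita contexts supplied by \cite{AB1}, the gluing construction $\mD(P,\mB\# H,Q)$ of \cite[Prop.~4.4]{CS}, and the identification of $\mD$ with $\underline{\mA \# H}$ — your corner description $e_y\,{}_y(\mB\# H)_x\,e_x$ is exactly the paper's $M^{y} \otimes {}_y(\mB\# H)_x \otimes N^{x}$ after collapsing the tensor products, and your $P^x,Q^x$ coincide with the paper's $M^x,N^x$. The only cosmetic difference is that faithfulness of the comparison semifunctor needs just the faithfulness of $F$, not minimality of $\mB$ (which the theorem does not assume).
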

\begin{proof}
Let $\mA$ an partial $H$-module category and $({\cal B},F)$ a globalization of $\mA$. Identifying $\mA$ and $F(A)$ via $F$ as before, $\ASH$ can be considered as a subcategory of $\mB \# H$. As we have already mentioned before, for each $x\in \mA_0$ the pair $({}_x\mB_x ,F)$ is indeed a globalization for the partial action of $H$ on the algebra ${}_x\mA_x$, then, by theorem 4 in the reference \cite{AB1} we conclude that 
\[\xASHx \sim_{M} \ _{x}(\mB \# H)_{x}\]
for each $x \in \mA_0$. 

A Morita context between $\xASHx$ and $_x(B \# H)_x$ is given by the \linebreak $(\back \xASHx,\ _{x}(\mB \# H)_{x})$-bimodules $M^{x}$ and $(\ _{x}(\mB \# H)_{x}, \xASHx)$-bimodules $N^x $ defined by 
\[
M^{x} = (_{x}\mA_{x} \# 1_{H}) (_{x}1_{x} \# H) = \{ \sum \ _{x}^{}f_{x}^{i} \otimes h_{i} | \ _{x}^{}f_{x}^{i} \in \back \xAx, h_i \in H \}
\]
and
\[
N^{x}  = (_{x}1_{x} \# H)  (_{x}\mA_{x} \# 1_{H}) =  \{ \sum(h^{i}_{(1)} \rhd \ _{x}^{}f_{x}^{i}) \otimes h_{(2)}^{i} | \ _{x}^{}f^{i}_{x} \in _{x}\mA_{x}, h^i \in H \} ,
\]
both subspaces of $_x(\mB \# H)_x$. The bimodule structure in both of them is given by the composition in $ _x(\mB \# H)_x$. 

Now let us take a look at $\mD = \mD(M,\mB \# H, N)$, which we know to be Morita equivalent to $\mB \# H$. Its objects are the same objects of $\ASH$ and, with respect to the morphisms, since the first tensor in the definition of $_{y}\mD_{x}$ is defined over $_y\mB_y \# H$, the second over $_x\mB_x \# H$, and the bimodules $M^y$ and $N^x$ live inside  $_y\mB_y \# H$ and $_x\mB_x \# H$ respectively, we have 
\beqnast
_{y}\mD_{x} & = & M^{y} \otimes \ _{y}(\mB \# H)_{x} \otimes N^{x} \\
& = & (\yAy \# 1_H)(\xidx \# H) \otimes \ _{y}(\cal B \# H)_{x} \otimes (\xidx \# H)(\xAx \# 1_H) \\
& = & (\yidy^A \# 1_H) \ot (\yAy \# 1_H)(\xidx \# H)  \ _{y}(\cal B \# H)_{x}  (\xidx^A \# H)(\xAx \# 1_H) \ot(\xidx^A \# 1_H) \\
& = & (\yidy^A \# 1_H) \ot \ _y (\underline{\mA \# H})_x \ot (\xidx^A \# 1_H)\\
& \simeq & \yASHx
\eqnast
as vector spaces. Moreover, composition in $\mD$ is given by 
\beqnast
& & [(\zidz^A \otimes 1_H) \ot  ( \zfy \# h ) \ot (\yidy^A \otimes 1_H)] \circ  
[(\yidy^A \ot 1_H) \ot  ( \ygx \#  k ) \ot (\xidx^A \ot 1_H)] \\ 
& = & (\zidz^A \ot 1_H) \ot  \sum \zfy (h_{(1)} \cdot \ygx) \otimes h_{(2)}k \ot (\xidx^A \ot 1_H) 
\eqnast
and this shows that there is an obvious isomorphism of categories $\cal{G}: \ASH \rightarrow \mD$, which is the identity on $\ASH_0=\mD_0$ and is defined on morphisms by 
\beqnast
\cal{G}: \yASHx & \rightarrow & _{y}{\mD}_{x} \\
\yfx \# k  & \mapsto & (\yidy^A \ot 1_H) \ot ( \yfx \# k ) \ot (\xidx^A \ot 1_H)
\eqnast
Therefore,
\[
\ASH {\cong} \mD \underset{\cal{M}}{\sim} \mB \# H . 
\]
\end{proof}

\section*{Acknowledgements}

The authors would like to thank Andrea Solotar for fruitful discussions during her visits to UFPR, and Mariano Suarez-Alvarez for his suggestions, which shaped one of  the main ideas in this paper.

\footnotesize
\noindent E.R.A.:
\\ Centro Polit\'ecnico, Departamento de Matem\'atica,\\ 
Universidade Federal do Paran\'a,\\
CP 019081, Jardim das Am\'ericas, Curitiba-PR,
81531-990, Brasil.
{\tt rolo1rolo@gmail.com}

\noindent M.M.S.A.:
\\ Centro Polit\'ecnico, Departamento de Matem\'atica,\\ 
Universidade Federal do Paran\'a,\\
CP 019081, Jardim das Am\'ericas, Curitiba-PR,
81531-990, Brasil.
{\tt marcelomsa@ufpr.br}

\noindent  E.B.:
\\Departamento de Matem\'atica,\\
Universidade Federal de Santa Catarina,\\
Campus Trindade, Florian\'opolis-SC, 
88040-900, Brasil.
{\tt ebatista@mtm.ufsc.br}

\end{document}